\newtheorem{tm}{Theorem}
\newtheorem{defi}{Definition}
\newtheorem{rem}{Remark}
\newtheorem{rems}{Remarks}
\newtheorem{lm}{Lemma}
\newtheorem{ex}{Example}
\newtheorem{prop}{Proposition}
\newtheorem{nota}{Notation}
\begin{document}
\title{On Descartes' rule of signs for hyperbolic polynomials}
\author{Vladimir Petrov Kostov}
\address{Universit\'e C\^ote d’Azur, CNRS, LJAD, France}
\email{vladimir.kostov@unice.fr}

\begin{abstract}
  We consider univariate real polynomials with all
  roots real and  
  with two sign changes in the sequence of their coefficients which are all
  non-vanishing. One of the changes is between the linear and the constant
  term. By Descartes' rule of signs, such degree $d$ polynomials have $2$
  positive and $d-2$ negative roots. We consider the sequences of the moduli
  of their roots on the real
  positive half-axis. When the moduli are distinct, we give the
  exhaustive answer to the question at which positions can the moduli of
  the two positive roots be.

  {\bf Key words:} real polynomial in one variable; hyperbolic polynomial;
  sign pattern; Descartes' rule of signs\\

{\bf AMS classification:} 26C10
\end{abstract}
\maketitle

\section{Introduction}

In the present text we consider real polynomials in one variable. Such a monic
degree $d$ polynomial $Q$ is representable in the form $Q:=\sum _{j=0}^da_jx^j$,
$a_j\in \mathbb{R}$, $a_d=1$. The polynomial $Q$ is {\em hyperbolic}
if it has $d$ real roots
counted with multiplicity. We are interested in the {\em generic} case when
all coefficients $a_j$ are non-zero and the moduli of all roots are distinct.

Descartes' rule of signs (see \cite{Ca}, \cite{Cu}, \cite{DG}, \cite{Des},
\cite{Fo}, \cite{Ga}, \cite{J}, \cite{La} or \cite{Mes}) states that the
number $pos$ of
positive roots of $Q$
is majorized by the number $\tilde{c}$ of sign changes in its sequence of
coefficients. When applying this rule to $Q(-x)$, one deduces that the number
$neg$ of negative roots is majorized by the number $\tilde{p}$
of sign preservations.
As $pos+neg=\tilde{c}+\tilde{p}=d$, one finds that for hyperbolic polynomials,
$pos=\tilde{c}$ and $neg=\tilde{p}$. Moreover ${\rm sgn}(a_0)=(-1)^{pos}$.

The tropical version of Descartes' rule of signs is dealt with in
\cite{FoNoSh}. The problem treated in the present paper is part of
a more general problem about real univariate, but not necessarily
hyperbolic polynomials, considered in~\cite{FoKoSh} (see also the references
therein).

\begin{defi}
  {\rm The signs of the coefficients of the polynomial $Q$ define the
    {\em sign pattern} $\sigma (Q):=({\rm sgn}(a_d)$, ${\rm sgn}(a_{d-1})$,
    $\ldots$,
    ${\rm sgn}(a_0))$. For monic polynomials one has  ${\rm sgn}(a_d)=+$ and
    knowing the sign pattern is the same as knowing the corresponding
    {\em change-preservation pattern} which is a $d$-vector whose $j$th
    component is $c$ (resp. $p$) if $a_{d+1-j}a_{d-j}<0$ (resp. if
    $a_{d+1-j}a_{d-j}>0$). Example: for $d=5$, to the sign pattern $(+,-,-,+,-,+)$
    corresponds the change-preservation pattern $cpccc$.}
\end{defi}

Given a generic monic hyperbolic polynomial $Q$, one can consider the
moduli of its roots which are $d$ distinct numbers on the positive
half-axis and note the positions of the moduli of the negative roots. The
positive roots are denoted by $\alpha _1<\cdots <\alpha _{pos}$ and the
moduli of negative roots by $\gamma _1<\cdots <\gamma _{neg}$. The definition
of {\em order} (in the sense of order of moduli of roots)
and the corresponding notation
should be clear from the following example:

\begin{ex}
  {\rm Suppose that $d=7$, $pos=4$ and $neg=3$. Suppose that}

  $$\gamma _1<\alpha _1<\gamma _2<\gamma _3<\alpha _2<\alpha _3<\alpha _4~.$$
  {\rm Then we say that the moduli of the roots of the polynomial define the
    order $NPNNPPP$, i.~e. the letters $N$ and $P$ indicate the relative
    positions on the positive half-axis of the moduli of the negative and
    positive roots.}
  \end{ex}

\begin{defi}
  {\rm (1) Given two $d$-vectors -- a change-preservation pattern
    and an order --
    we say that they are {\em compatible} if the number of
letters $c$ (resp. $p$) of the former is equal to the number of letters
$P$ (resp. $N$) of the latter.
\vspace{1mm}

(2) A compatible couple (change-preservation pattern, order)
(we say {\em couple} for short) is
{\em realizable} if there exists a monic generic hyperbolic polynomial
whose sign of the coefficients and whose order of the moduli of roots
define the given couple. In this case we also say that the first of the
  components of the couple is realizable with the second one and vice versa.}
  \end{defi}

The present paper studies realizability of couples, a question to whose answer 
Descartes' rule of signs gives no indication. 

\begin{defi}\label{deficanon}
  {\rm For each change-preservation pattern (or sign pattern)
    one defines its corresponding
    {\em canonical} order as follows. One reads the change-preservation
    pattern from the right and
    to each letter $c$ (resp. $p$) one puts into correspondence the letter
    $P$ (resp. $N$). Example: for $d=5$, to the change-preservation pattern
    $ccpcp$ (or, equivalently, to the sign pattern $(+,-,+,+,-,-)$) corresponds
    the canonical order $NPNPP$.}
\end{defi}

Each change-preservation or sign pattern is realizable with its canonical order,
see \cite[Proposition~1]{KoSe}. When a change-preservation or sign 
pattern is realizable {\em only} with
its corresponding canonical order, it is also called {\em canonical}.

\begin{rems}\label{remscanon}
  {\rm (1) It is shown in \cite{KoRM} that canonical are exactly these
    change-preservation patterns which have no isolated sign change and
    no isolated sign preservation, i.~e. which contain no three
    consecutive components $PNP$ or $NPN$. Hence canonical are exactly these
    sign patterns which have no four consecutive signs $(+,+,-,-)$,
    $(-,-,+,+)$, $(+,-,-,+)$ or $(-,+,+,-)$.
    \vspace{1mm}

    (2) An order realizable with a single sign pattern is called {\em rigid}.
    It turns
    out that rigid are exactly the {\em trivial} orders (when all roots are of
    the same sign) and the orders in which moduli of positive and negative
    roots alternate, see~\cite{KoPMD22}.}
  \end{rems}

In what follows we denote by $\Sigma _{i_1,i_2,\ldots ,i_s}$, $s=\tilde{c}+1$,
$i_1+\cdots +i_s=d+1$,
sign patterns beginning with $i_1$ signs $+$ followed by $i_2$ signs $-$
followed by $i_3$ signs $+$ etc. Couples in which
the sign pattern has just one sign change (i.~e. $\tilde{c}=1$) have been
considered in \cite[Theorem~1 and Corollary~1]{KoPuMaDe}.
The result of \cite{KoPuMaDe}
reads:  

\begin{tm}\label{tmc1}
  The sign pattern $\Sigma _{m,n}$, $1\leq n\leq m$, is realizable with
  and only with orders such that $\alpha _1<\gamma _{2n-1}$. For
  $1\leq m\leq n$, it is realizable with and only with orders such that
  $\gamma _{d-2m}<\alpha _1$.
  \end{tm}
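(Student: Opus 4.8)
The plan is to translate the sign-pattern requirement into a statement about the single positive root relative to a monotone sequence of ratios formed from the negative roots, and then to extract the admissible positions from two sharp inequalities between those ratios and the individual moduli.

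\emph{Reduction to a ratio criterion.} Write $Q(x)=(x-\alpha_1)P(x)$ with $P(x)=\prod_{j=1}^{N}(x+\gamma_j)=\sum_{k=0}^{N}e_{N-k}x^{k}$, where $N:=d-1$ and $e_j:=e_j(\gamma_1,\dots,\gamma_N)$ are the (positive) elementary symmetric functions. Then $a_{d-j}=e_j-\alpha_1 e_{j-1}=e_{j-1}(\rho_j-\alpha_1)$ for $1\le j\le N$, where $\rho_j:=e_j/e_{j-1}$, so $\mathrm{sgn}(a_{d-j})=\mathrm{sgn}(\rho_j-\alpha_1)$ (and $a_d=1$, $a_0=-\alpha_1 e_N<0$). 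Newton's inequalities $p_{j-1}p_{j+1}\le p_j^{2}$ for the normalized means $p_j=e_j/\binom{N}{j}$, combined with the fact that $\binom{N}{j}/\binom{N}{j-1}$ decreases in $j$, force $\rho_1>\rho_2>\cdots>\rho_N$. Hence the sign pattern of $Q$ is automatically $\Sigma_{m,n}$ with $m$ the unique index for which $\rho_m<\alpha_1<\rho_{m-1}$ (conventions $\rho_0=+\infty$, $\rho_d=0$). Thus $\Sigma_{m,n}$ is realizable with a given order iff there exist $\gamma_1<\cdots<\gamma_N$ and an $\alpha_1$, distinct from every $\gamma_j$ and lying in $(\rho_m,\rho_{m-1})$, whose position among the $\gamma_j$ yields that order.

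\emph{Duality.} The reciprocal polynomial $x^{d}Q(1/x)/Q(0)$ has roots $1/\alpha_1$ and $-1/\gamma_j$, sends $\Sigma_{m,n}$ to $\Sigma_{n,m}$, and reflects the position of $\alpha_1$ among the moduli ($k$ moduli below $\alpha_1$ become $N-k$ below). It therefore interchanges the two assertions, so I may assume $n\le m$ and prove: the admissible orders are exactly those with at most $2n-2$ of the $\gamma_j$ below $\alpha_1$, i.e. $\alpha_1<\gamma_{2n-1}$. (The extreme cases $n=1$, and dually $m=1$, are immediate, since $\Sigma_{m,1}=(+,\dots,+,-)$ forces $\alpha_1<\gamma_1$; so I may take $n\ge 2$ below.)

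\emph{Necessity --- the crux.} Since $\alpha_1<\rho_{m-1}=e_{m-1}/e_{m-2}$, necessity reduces to the sharp inequality $e_{m-1}/e_{m-2}\le\gamma_{2n-1}$, that is $e_p/e_{p-1}\le\gamma_{2(N-p)+1}$ with $p=m-1$. Splitting each symmetric sum according to whether it contains $\gamma_s$, $s=2n-1$, yields the identity $e_p-\gamma_s e_{p-1}=e_p^{(s)}-\gamma_s^{2}e_{p-2}^{(s)}$, where $(s)$ denotes omission of $\gamma_s$; so it suffices to show $e_p^{(s)}\le\gamma_s^{2}e_{p-2}^{(s)}$ for the remaining $N-1$ moduli, of which $2(n-1)$ are smaller than $\gamma_s$ and only $m-n-1$ are larger. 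This is where the difficulty concentrates: the inequality is tight and is exactly what pins the bound at $2n-1$. I expect to prove it by a Hall-type matching, injecting each $(m-1)$-subset $S$ into an $(m-2-1)\!=\!(p-2)$-subset $T\subset S$ by deleting two moduli $\le\gamma_s$ (so that the deleted product is $\le\gamma_s^{2}$); every such $S$ contains at least $n\ge 2$ moduli $\le\gamma_s$, which makes the defect condition plausible. Alternatively one may argue by induction on $N$ through the same splitting, or by a log-concavity estimate on the generating polynomial $\prod_{j\ne s}(1+\gamma_j z)$. This inequality --- and, via the duality, its counterpart $e_m/e_{m-1}\ge\gamma_{d-2m}$ --- is the main obstacle.

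\emph{Sufficiency.} For the converse I must realize every position $k\in\{0,1,\dots,2n-2\}$ (number of $\gamma_j$ below $\alpha_1$). The canonical order sits at $k=n-1$, which lies in this range and is realizable by \cite[Proposition~1]{KoSe}. Starting from such a configuration I deform the $\gamma_j$ and $\alpha_1$ while keeping the sandwich $\rho_m<\alpha_1<\rho_{m-1}$; the position of $\alpha_1$ changes only when it crosses some $\gamma_k$, i.e. on the non-generic boundary, so the set of reachable positions is an interval, and its upper endpoint is governed precisely by the sharpness established above: $\alpha_1$ can be pushed up to, but not past, $\gamma_{2n-1}$, because in the coalescing (two-cluster) limits with $\rho_{m-1}\to\gamma_{2n-1}$ the constraint $\alpha_1<\rho_{m-1}$ degenerates to $\alpha_1<\gamma_{2n-1}$. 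Concretely I would place $k$ moduli near a small value and $N-k$ near a large value, read off $m$ from the ratio criterion in the limit, and then perturb to distinct moduli; continuity preserves both the sign pattern and the order, producing each admissible position.
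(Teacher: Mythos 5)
First, a remark on the ground truth: the paper does not prove Theorem~1 at all --- it is quoted from \cite[Theorem~1 and Corollary~1]{KoPuMaDe} --- so there is no internal proof to measure you against, and I can only assess your argument on its own terms. Your reduction is correct and is the natural one: with $a_{d-j}=e_j-\alpha_1e_{j-1}=e_{j-1}(\rho_j-\alpha_1)$ and $\rho_j=e_j/e_{j-1}$ strictly decreasing (Newton's inequalities combined with the decreasing binomial ratio), the sign pattern is $\Sigma_{m,n}$ exactly when $\rho_m<\alpha_1<\rho_{m-1}$, and necessity correctly reduces, via your splitting identity, to $e_p^{(s)}\le\gamma_s^2e_{p-2}^{(s)}$ with $p=m-1$, $s=2n-1$ (for $n<m$; for $n=m$ the index $2n-1$ exceeds $d-1$ and the condition is vacuous, which you should say). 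But you leave precisely this inequality --- which you yourself call ``the main obstacle'' --- unproved, so the argument is incomplete at its crux. The gap is closable by double counting rather than by a Hall-type matching: each $p$-subset $T$ of $I\setminus\{s\}$ contains at least $n$ indices below $s$ (there are only $m-n-1$ indices above $s$), hence at least ${n\choose 2}$ ``small'' pairs, so ${n\choose 2}\prod T\le\gamma_s^2\sum\prod(T\setminus\{i,j\})$, the sum running over small pairs $\{i,j\}\subset T$; and each $(p-2)$-subset is reached at most ${n\choose 2}$ times because its complement in $I\setminus\{s\}$ has exactly $n$ elements. Summing over $T$ gives ${n\choose 2}e_p^{(s)}\le\gamma_s^2{n\choose 2}e_{p-2}^{(s)}$, as required.

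Two further points. (i) Sufficiency is also only a sketch. The interval argument is fine once you note that $\{(\gamma,\alpha_1):\rho_m<\alpha_1<\rho_{m-1}\}$ fibers over the connected set $(0,\infty)^{d-1}$ with nonempty open interval fibers, and that the position index changes by at most one across a generic wall-crossing; but you must still exhibit the extreme position $k=2n-2$ explicitly, and ``read off $m$ in the limit'' hides a real computation: placing $2n-2$ moduli near $a$ and the remaining $m-n$ near $b\gg a$ gives $\rho_{m-1}\approx\tfrac{n}{n-1}a$ and $\rho_m\approx\tfrac{n-1}{n}a$, so $\alpha_1\in(a,\tfrac{n}{n-1}a)$ realizes $k=2n-2$; this needs to be written out and perturbed to distinct moduli. (ii) An indexing warning: your own reduction, transported by $i_r$, yields the condition $\gamma_{d-2m+1}<\alpha_1$ for $m\le n$, not $\gamma_{d-2m}<\alpha_1$. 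The statement as printed appears to carry an off-by-one (for $\Sigma_{1,2}$ only the order $NP$, i.e.\ $\gamma_{d-1}<\alpha_1$, is realizable, whereas $\gamma_{d-2m}=\gamma_0$ would impose nothing), and the dual inequality you propose to prove, $e_m/e_{m-1}\ge\gamma_{d-2m}$, is the non-sharp version; the sharp one, and the one your duality actually delivers, is $e_m/e_{m-1}\ge\gamma_{d-2m+1}$.
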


In the particular case $m=n$, Theorem~\ref{tmc1} imposes no restriction
on $\alpha _1$, so in this case all compatible couples are realizable.

\begin{rems}\label{remsimir}
  {\rm (1) There exist two commuting involutions acting on sign and
    change-preservation patterns and orders:}

  $$i_m~:~Q(x)\mapsto (-1)^dQ(-x)~~~\, {\rm and}~~~\, i_r~:~
  Q(x)\mapsto x^dQ(1/x)/Q(0)~.$$
  {\rm The involution $i_m$ exchanges the letters $c$ and $p$, the letters
    $P$ and $N$ and the numbers $\tilde{c}$ and $\tilde{p}$. The involution
    $i_r$ reads change-preservation patterns and orders from the right. It
    preserves the numbers $\tilde{c}$ and $\tilde{p}$.
    The factors $(-1)^d$ and $1/Q(0)$ are introduced to preserve the set
    of monic polynomials. A given couple $C$ is realizable or not together
    with the remaining
    one or three
    couples obtained under the $\mathbb{Z}_2\times \mathbb{Z}_2$-action
    defined by $i_m$ and $i_r$. One has always $i_m(C)\neq C$, but one could
    have $i_r(C)=C$ or $i_mi_r(C)=C$.
    \vspace{1mm}
    
    (2) Using the involution $i_m$ one can
    reduce the study of realizability of couples to the case
    $\tilde{c}\leq d/2$. The sign patterns  
    $\Sigma _{1,d}$ and $\Sigma _{d,1}$ are canonical. For $m>1$ and $n>1$,
    couples with sign patterns $\Sigma _{m,n}$ have just one sign change,
    and it is isolated. In this sense they are 
    the closest to couples with canonical sign pattern.}
  \end{rems}
    
In this paper we give the exhaustive answer to the question of realizability
of couples with sign patterns of the form $\Sigma _{m,n,1}$, i.~e. with two
sign changes one of which (for $n>1$) is isolated. This is the second simplest
case when the sign pattern is not canonical. Partial results on it 
are obtained in~\cite{KoPuMaDe}. One can observe that
$i_r(\Sigma _{m,n,1})=\Sigma _{1,n,m}$.

\begin{nota}
  {\rm We remind that for $\tilde{c}=2$, we denote by $\alpha _1<\alpha _2$ the
    positive and by $0<\gamma _1<\cdots <\gamma _{d-2}$ the moduli of the
    negative roots. For a monic hyperbolic polynomial $Q$ realizing a couple
    $C$ with sign pattern $\Sigma _{m,n,1}$, we denote by $\nu (Q)$ or $\nu (C)$
    the index $j$ such that $\gamma _j<\alpha _2<\gamma _{j+1}$.}
\end{nota}

\begin{tm}\label{tmresult}
  (A) Suppose that $m>n$. 
  \vspace{2mm}
  
  (1) If $n=1$, then the sign pattern $\Sigma _{m,n,1}$ is canonical.
  \vspace{1mm}
  
  (2) If $n\geq 4$, then a couple $C$ is realizable if and only if
  $\alpha _1<\gamma _1$ and $\nu (C)\leq 2n-2$.
  \vspace{1mm}
  
  (3) If $n=2$ or $3$, then a couple $C$ is realizable if and only if
  either $\alpha _1<\gamma _1$ and $\nu (C)\leq 2n-2$ or 
  $\gamma _1<\alpha _1<\alpha _2<\gamma _2<\cdots <\gamma _{d-2}$.
  \vspace{2mm}
  
  (B) Suppose that $m=n>1$. (If $m=n=1$, then there are no negative roots
  and one has $\alpha _1<\alpha _2$.) 
  \vspace{2mm}
  
  (4) If $m=n\geq 4$, then a couple is realizable if and only if
  $\alpha_1<\gamma _1$.
  \vspace{1mm}

  (5) If $m=n=2$ or $m=n=3$, then a couple is realizable if and only if
  either $\alpha_1<\gamma _1$ or
  $\gamma _1<\alpha _1<\alpha _2<\gamma _2<\cdots <\gamma _{d-2}$.
  \vspace{2mm}
  
  (C) Suppose that $m<n$.
  \vspace{2mm}

  (6) If $m=1$ and $n\geq 3$, the sign pattern $\Sigma _{1,n,1}$ is canonical.
  \vspace{1mm}

  (7) If $m\geq 2$ and $n\geq 5$, then a couple $C$ is realizable if and
  only if $\nu (C)\geq d-2m=n-m$.
  \vspace{1mm}

  (8) The necessary and sufficient conditions for realizability of the
  remaining sign patterns are as follows:

  The sign pattern $\Sigma _{1,2,1}$ is realizable with all three orders
  $NPP$, $PNP$ and $PPN$.

  The sign pattern $\Sigma _{2,3,1}$ is realizable with and only with
  the orders $NPPNN$, $PPNNN$, $PNPNN$,
  $PNNPN$ and $PNNNP$.

  The sign pattern $\Sigma _{2,4,1}$ is realizable with and only with
  the orders $PNNPNN$, $PNNNPN$ and $PNNNNP$.

  The sign pattern $\Sigma _{3,4,1}$ is realizable with and only with
  the orders $PPNNNNN$, $PNPNNNN$, $PNNPNNN$, $PNNNPNN$, $PNNNNPN$ and
  $PNNNNNP$.

  \end{tm}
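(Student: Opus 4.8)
The plan is to reduce everything to the one-sign-change Theorem~\ref{tmc1} together with a direct study of the coefficient signs in the factorization
$$Q(x)=(x^2-sx+p)\prod_{i=1}^{d-2}(x+\gamma_i),\qquad s=\alpha_1+\alpha_2,\ p=\alpha_1\alpha_2 .$$
Writing $\prod_i(x+\gamma_i)=\sum_{k=0}^{d-2}b_kx^k$ with all $b_k>0$, one gets $a_j=b_{j-2}-sb_{j-1}+pb_j$, so, putting $u_j:=b_{j-1}/b_j>0$, the sign of $a_j$ equals the sign of $u_{j-1}+p/u_j-s$. By Newton's inequalities the sequence $u_1<u_2<\cdots<u_{d-1}$ is increasing, and heuristically $a_j>0$ precisely when $u_j$ lies outside $(\alpha_1,\alpha_2)$; thus $\Sigma_{m,n,1}$ forces the increasing sequence $(u_j)$ to cross the level $\alpha_1$ below $u_1$ and the level $\alpha_2$ between $u_n$ and $u_{n+1}$. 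This level-crossing picture is the engine behind all the numerical thresholds.

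Before using it I would dispose of the canonical items. The change-preservation pattern of $\Sigma_{m,n,1}$ is $p^{m-1}cp^{n-1}c$; its only possibly isolated features are the first change (isolated iff $m,n\ge2$) and the central preservation block (isolated iff $n=2$), so by Remark~\ref{remscanon}(1) the pattern is canonical exactly when $n=1$ or when $m=1,\ n\ge3$. In those cases \cite{KoSe} leaves only the canonical order $PN^{n-1}PN^{m-1}$, which is items~(1) and~(6). The involution $i_r$ interchanges $\Sigma_{m,n,1}$ with $\Sigma_{1,n,m}$ while reversing orders. For $d\ge4$ one has $\tilde c=2\le d/2$, so Remark~\ref{remsimir}(2) shows $i_m$ brings no simplification; the only exception is the degree-$3$ pattern $\Sigma_{1,2,1}$, where $i_m$ produces the one-sign-change pattern $\Sigma_{2,2}$, so its part of item~(8) is immediate from Theorem~\ref{tmc1}.

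For necessity the two regimes split. Since the lower sign change of $\Sigma_{m,n,1}$ lies at the constant term, its derivative satisfies $Q'\sim\Sigma_{m,n}$, the unique positive root $\beta$ of $Q'$ lies in $(\alpha_1,\alpha_2)$, and the moduli $\delta_j$ of the negative roots of $Q'$ interlace the $\gamma_i$ as $\gamma_{j-1}<\delta_j<\gamma_j$. When $m<n$, Theorem~\ref{tmc1} applied to $Q'$ lower-bounds $\beta$; since $\alpha_2>\beta$ this bound passes to $\alpha_2$ and, after matching indices through the interlacing, gives $\nu(C)\ge n-m$, with no surviving condition on $\alpha_1$, exactly as in item~(7). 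When $m\ge n$ the same computation only upper-bounds $\beta$, hence $\alpha_1$, by $\gamma_{2n-1}$, which is too weak, and I would read the sharp conditions off the level-crossing picture. Positivity of the top $m$ coefficients bounds $s$ and $p$ from above and pushes $\alpha_2$ below the ratio $u_{n+1}$; a careful localization of that ratio among the $\gamma_i$ yields $\alpha_2<\gamma_{2n-1}$, i.e. $\nu(C)\le2n-2$. Negativity of the whole block $a_1,\dots,a_n$ together with $a_0>0$ confines $\alpha_1$ below the smallest ratios and yields $\alpha_1<\gamma_1$. The replacement of the expected index $n$ by $2n-1$ is the factor-of-two phenomenon already present in Theorem~\ref{tmc1}: a ratio $u_j$ of consecutive symmetric functions of the $\gamma_i$ is confined between $\gamma$'s whose indices differ by about a factor two, and making this confinement precise is the technical core.

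For sufficiency I would realize every admissible order by induction on the number of negative roots: multiplying a realization in one lower degree by a factor $(x+\gamma)$ adds one negative root whose modulus, by the choice of $\gamma\to 0$ or $\gamma\to+\infty$, can be placed as the new smallest or largest $\gamma_i$, and by alternating such insertions one sweeps $\nu(C)$ through its whole admissible range while keeping every $a_j$ away from $0$; the canonical realization of \cite{KoSe} serves as the base case. The exceptional orders $\gamma_1<\alpha_1<\alpha_2<\gamma_2<\cdots$ of items~(3) and~(5), possible only for $n\le3$, and the listed orders of the remaining patterns in item~(8) I would produce by separate explicit configurations, excluding the non-listed orders directly from the sign inequalities on the $b_k$. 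I expect the main obstacle to be precisely the sharpness at the thresholds $n=4$ and $n=5$: one must show not merely that some coefficient can vanish but that the joint sign configuration of two neighbouring coefficients switches exactly there, so that a whole new family of orders (those with $\gamma_1<\alpha_1$) first becomes realizable at $n=5$ while the exceptional order ceases to exist at $n=4$. Controlling this needs the two-sided form of the ratio estimates rather than their use as one-sided inequalities.
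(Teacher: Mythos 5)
Your outline correctly isolates the canonical cases and the overall architecture, but at the two places where the theorem actually requires work your argument either loses sharpness or is explicitly left as a heuristic. First, for part (7) the derivative argument does not give the stated bound. With $Q'$ of degree $d-1$ and sign pattern $\Sigma _{m,n}$, Theorem~\ref{tmc1} yields $\delta _{(d-1)-2m}<\beta$ for the moduli $\delta _j$ of the negative roots of $Q'$, and the interlacing $\gamma _{j-1}<\delta _j<\gamma _j$ only converts this into $\gamma _{d-2m-2}<\beta <\alpha _2$, i.e. $\nu (C)\geq n-m-2$ --- two units short of the required $\nu (C)\geq n-m$. The paper closes this gap quite differently: it proves the sharp statement directly for the border patterns $\Sigma _{n-1,n,1}$, $\Sigma _{2,n,1}$ and $\Sigma _{3,5,1}$ (Lemmas~\ref{lmn-1n}, \ref{lm2n1} and \ref{lm351}, via the involution $i_r$ and explicit inequalities between elementary symmetric functions) and then inducts by factoring out $(x+\gamma _1)$ (Lemma~\ref{lmQQ1}), which keeps the sign pattern in the three-block family and therefore loses nothing.

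Second, for part (A) the genuinely new content is the exclusion of $\nu (C)=2n-1$ (everything else is quoted from \cite{KoPuMaDe}), and this is exactly the step you defer to ``a careful localization of that ratio among the $\gamma _i$'' and yourself call the technical core. Log-concavity of the $b_k$ does make $u_j:=b_{j-1}/b_j$ nondecreasing, but $a_j/b_{j-1}=u_{j-1}-s+p/u_j$ factors as $(u-\alpha _1)(u-\alpha _2)/u$ only when $u_{j-1}=u_j$, and no two-sided confinement of $u_j$ between specific $\gamma$'s with the factor-of-two index shift is stated precisely, let alone proved. The paper's route is instead a degeneration-plus-connectedness argument (using openness and connectedness of the set of hyperbolic polynomials with a fixed sign pattern to pass to the boundary case $\alpha _2=\gamma _{2n-1}=1$), an induction on $m$ with base $m=n+1$, and, for the base case, a delicate inequality between the sums $S_1(J)$, $S_2(J)$, $S_3(J)$ over complementary index sets combined with the arithmetic--harmonic mean inequality. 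Without either that machinery or a proof of your confinement claim, the ``only if'' directions of (2), (3) and (7) are not established; the sufficiency constructions (small or large extra roots, the canonical realization as base case) and the explicit examples needed for item (8) are comparatively routine and your sketch of them is plausible.
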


The rest of the paper is occupied by the proof of Theorem~\ref{tmresult}.

\section{Proof of part (A) of Theorem~\protect\ref{tmresult}}

Part (1) of Theorem~\ref{tmresult}
needs no proof, see part (1) of Remarks~\ref{remscanon}.
For $m>n$, Theorems~3 and~4 of
\cite{KoPuMaDe} say that either $\nu (Q)\leq 2n-1$ or
$\gamma _1<\alpha _1<\alpha _2<\gamma _2$. The latter possibility exists only
for $n=2$ and $n=3$, and it is realizable. All cases when
$\nu (Q)\leq 2n-2$ and $\alpha _1<\gamma _1$ are also realizable.
We prove here that the case $\nu (Q)=2n-1$
is not realizable from which parts (2) and (3) of Theorem~\ref{tmresult}
follow.

\begin{tm}\label{tm0}
  For $m>n\geq 2$, there exists no monic hyperbolic polynomial $Q$ defining the
  sign pattern $\Sigma _{m,n,1}$ and with $\nu (Q)=2n-1$.
\end{tm}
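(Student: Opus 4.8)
The plan is to divide out the smaller positive root and thereby reduce the sign pattern $\Sigma_{m,n,1}$, with its two sign changes, to a pattern with a single sign change, to which Theorem~\ref{tmc1} applies. So I would suppose, for contradiction, that a monic hyperbolic $Q$ with sign pattern $\Sigma_{m,n,1}$ (where $m>n\geq 2$, hence $d=m+n$) and with $\nu(Q)=2n-1$ exists. Set $Q_2:=Q/(x-\alpha_1)$, a monic polynomial of degree $d-1$ whose roots are the single positive number $\alpha_2$ together with $-\gamma_1,\dots,-\gamma_{d-2}$; write $Q_2=\sum_{j=0}^{d-1}c_jx^j$ with $c_{d-1}=1$.

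The key observation is that the leading block of $+$'s of $Q_2$ is long. From $Q=(x-\alpha_1)Q_2$ one reads $a_k=c_{k-1}-\alpha_1 c_k$, that is, $c_j=a_{j+1}+\alpha_1 c_{j+1}$. The pattern $\Sigma_{m,n,1}$ forces $a_{n+1},\dots,a_d>0$, and $\alpha_1>0$; so a downward induction starting from $c_{d-1}=1$ gives $c_n,c_{n+1},\dots,c_{d-1}>0$. Since $Q_2$ has exactly one positive root, Descartes' rule gives it exactly one sign change, and as $c_0=Q_2(0)=-\alpha_2\prod_j\gamma_j<0$, the polynomial $Q_2$ realizes a sign pattern $\Sigma_{M,N}$ whose initial block of $+$'s has length $M\geq m$; hence $N=(m+n)-M\leq n$, with $1\leq N\leq M$.

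Now I would apply the necessary (``only with'') direction of Theorem~\ref{tmc1}, in the case $N\leq M$, to the one-positive-root polynomial $Q_2$: its positive root must satisfy $\alpha_2<\gamma_{2N-1}$. Because $N\leq n$ and the $\gamma$'s increase, $\gamma_{2N-1}\leq\gamma_{2n-1}$ (the index $2n-1$ being admissible since $m>n$ gives $2n-1\leq d-2$), so $\alpha_2<\gamma_{2n-1}$. This contradicts $\nu(Q)=2n-1$, which means $\gamma_{2n-1}<\alpha_2$, and the theorem follows.

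The heart of the argument is the reduction together with the one-sided recurrence, both of which are immediate; the step I expect to require the most care is that Theorem~\ref{tmc1} presupposes a genuine sign pattern with no vanishing coefficient, whereas the lower coefficients $c_1,\dots,c_{n-1}$ of $Q_2$ could a priori be zero. I expect this to be the main, though routine, obstacle: the conditions on $Q$ --- sign pattern $\Sigma_{m,n,1}$, hyperbolicity with pairwise distinct moduli, and $\nu(Q)=2n-1$ --- cut out an open set, whereas $\{c_j=0\}$ is a proper algebraic subset, so a small perturbation of $Q$ within the class makes all $c_j\neq 0$ and lets Theorem~\ref{tmc1} apply verbatim.
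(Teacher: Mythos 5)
Your argument is correct, and it takes a genuinely different --- and much shorter --- route than the paper. The paper first uses openness and connectedness of the set of hyperbolic polynomials with sign pattern $\Sigma_{m,n,1}$ (citing \cite{KoAnn}) together with the non-emptiness of the subset with $\nu\leq 2n-2$ to reduce to the boundary situation $\alpha_2=\gamma_{2n-1}=1$ (Theorem~\ref{tm1}); that statement is then proved by induction on $m$, dividing out a negative root (Lemma~\ref{lmQQ1}) in the induction step, with the base case $m=n+1$ handled by an explicit computation with elementary symmetric functions of the $\gamma_i$ and averaging inequalities (Proposition~\ref{prop1} and Lemma~\ref{lm1}). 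You instead divide out the smaller positive root and feed the quotient --- which has exactly one sign change and a leading block of at least $m$ plus signs --- into the already known one-sign-change result, Theorem~\ref{tmc1}. Your route buys brevity and actually yields the stronger conclusion $\nu(Q)\leq 2N-2\leq 2n-2$ in one stroke, subsuming the bound $\nu(Q)\leq 2n-1$ that the paper imports from \cite{KoPuMaDe}. Two points deserve care. First, you rightly flag the possible vanishing of $c_1,\dots,c_{n-1}$; the clean justification of the perturbation is to pass to root coordinates $(\alpha_1,\alpha_2,\gamma_1,\dots,\gamma_{d-2})$, in which the set of admissible $Q$ is open (simple real roots and the strict inequalities persist) and each $c_j$ is, up to sign, an elementary symmetric polynomial of $(\alpha_2,-\gamma_1,\dots,-\gamma_{d-2})$, hence a nonzero polynomial whose zero locus is nowhere dense; in coefficient space $\{c_j=0\}$ is only an analytic, not an algebraic, subset, since $\alpha_1$ is an algebraic function of the coefficients, so your phrasing should be adjusted, but the conclusion stands. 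Second, your proof rests entirely on the necessity half of Theorem~\ref{tmc1} in the case $1\leq N\leq M$; that is precisely the half the paper itself invokes elsewhere and it checks out on low-degree examples, so the dependence is legitimate, but your argument is only as strong as that quoted statement.
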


\begin{proof}
  We remind that $m+n=d$ and $Q=\sum _{j=0}^{d}a_jx^j$, $a_{d}=1$.
  The set $A$ of hyperbolic
  polynomials defining the sign pattern $\Sigma _{m,n,1}$ (this is a subset
  of $Oa_0\ldots a_{d-1}$) is open and connected,
  see~\cite[Theorems~2 and~3]{KoAnn}. One knows that the subset of $A$ of
  polynomials with $\nu (.)\leq 2n-2$ is non-empty
  (\cite[Theorem~4]{KoPuMaDe}). If its subset of
  polynomials with $\nu (.)=2n-1$ is also non-empty, then by continuity
  there exists a polynomial $Q$ with $\alpha _2=\gamma _{2n-1}$
  (all other moduli of roots being distinct and $\alpha _1<\gamma _1$). One can
  perform a linear change $x\mapsto gx$, $g>0$, to obtain the condition
  $\alpha _2=\gamma _{2n-1}=1$. So now we concentrate on proving the following
  theorem from which Theorem~\ref{tm0} results:

\begin{tm}\label{tm1}
  For $m>n\geq 2$, there exists no monic hyperbolic polynomial defining the sign
  pattern $\Sigma _{m,n,1}$ and with roots $\alpha _1$, $\alpha _2$ and
  $-\gamma _i$, where

  $$0<\alpha _1<\gamma _1\leq \cdots \leq \gamma _{2n-2}\leq 1=\gamma _{2n-1}
  =\alpha _2<\gamma _{2n}\leq \cdots \leq \gamma _{d-2}~.$$
\end{tm}
\end{proof}

\begin{proof}[Proof of Theorem~\ref{tm1}]
We prove Theorem~\ref{tm1} first in the particular case $m=n+1$:
  
\begin{prop}\label{prop1}
  There exists no monic hyperbolic polynomial defining the sign pattern
  $\Sigma _{n+1,n,1}$ and with roots $\alpha _1$, $\alpha _2$ and $-\gamma _i$,
  where

  $$0<\alpha _1<\gamma _1\leq \cdots \leq \gamma _{2n-2}\leq 1=
  \gamma _{2n-1}=\alpha _2~.$$
\end{prop}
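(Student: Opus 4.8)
The plan is to use the fact that in the configuration of Proposition~\ref{prop1} one has $\alpha _2=1$ and $\gamma _{2n-1}=1$, so that $x=1$ and $x=-1$ are roots of $Q$ and we may factor $Q(x)=(x^2-1)R(x)$, where $R(x)=(x-\alpha _1)\prod _{i=1}^{2n-2}(x+\gamma _i)$ is monic of degree $2n-1$ with all roots in $[-1,1]$. Writing $R=\sum _{k=0}^{2n-1}b_kx^k$ and $P:=\prod _{i=1}^{2n-2}(x+\gamma _i)=\sum _kp_kx^k$, the first step is to record the two elementary relations $p_k=e_{2n-2-k}(\gamma )>0$ and $b_k=p_{k-1}-\alpha _1p_k$ (coming from $R=(x-\alpha _1)P$), together with $a_j=b_{j-2}-b_j$ (coming from the factor $x^2-1$, with $b_k=0$ for $k\notin[0,2n-1]$); here $e_\ell =e_\ell (\gamma _1,\dots ,\gamma _{2n-2})$ and $e_\ell =0$ for $\ell <0$. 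This expresses every coefficient of $Q$ through the elementary symmetric functions of the moduli $\gamma _i$.

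The heart of the argument is to assume that $Q$ realizes $\Sigma _{n+1,n,1}$ and to read off only the positivity of the top coefficients, $a_k>0$ for $n+1\le k\le 2n-1$. Substituting $i:=2n-1-k$ into the relations above one computes
$$a_k=(e_{i+2}-e_i)-\alpha _1(e_{i+1}-e_{i-1})=f_{i+1}-\alpha _1f_i,\qquad f_i:=e_{i+1}-e_{i-1},$$
so these inequalities read $f_{i+1}>\alpha _1f_i$ for $0\le i\le n-2$. I would then run a downward induction on $i$: granting the key inequality $f_{n-1}=e_n-e_{n-2}\le 0$ (discussed below), the case $i=n-2$ gives $\alpha _1f_{n-2}<f_{n-1}\le 0$, hence $f_{n-2}<0$; and once $f_{i+1}<0$, the inequality $f_{i+1}>\alpha _1f_i$ forces $f_i<0$. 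Iterating down to $i=0$ yields $f_0<0$, which contradicts $f_0=e_1=\gamma _1+\cdots +\gamma _{2n-2}>0$. This contradiction proves the proposition; note that the conditions $a_k<0$ for $1\le k\le n$ and $a_0>0$ are never needed.

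The main obstacle is the symmetric-function inequality $f_{n-1}=e_n-e_{n-2}\le 0$, i.e.\ $e_n\le e_{n-2}$ for the $N:=2n-2$ numbers $\gamma _i\in(0,1]$; this is the only place where the bound $\gamma _i\le 1$ is used. Since $n+(n-2)=N$, the indices are complementary, and I would prove the inequality by exhibiting a bijection $B\mapsto A(B)$ from the $n$-element subsets to the $(n-2)$-element subsets of $\{1,\dots ,N\}$ with $A(B)\subset B$: then $\prod _{i\in B}\gamma _i\le \prod _{i\in A(B)}\gamma _i$, because the omitted factors lie in $(0,1]$, and summing over all subsets gives $e_n\le e_{n-2}$. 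Such a containment-respecting bijection exists because the levels $n-2$ and $n$ are symmetric about the middle level $n-1=N/2$ of the Boolean lattice on $N$ elements, so a symmetric chain decomposition pairs the level-$n$ member of each chain with its level-$(n-2)$ member. I expect verifying this combinatorial inequality to be where the real work lies, the preceding reduction being purely formal.
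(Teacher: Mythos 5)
Your argument is correct, and it takes a genuinely different --- and substantially lighter --- route than the paper's. The formal reduction is identical (factor out $x^2-1$ and express all coefficients through the $e_\ell(\gamma_1,\dots,\gamma_{2n-2})$), and your identity $a_{2n-1-i}=f_{i+1}-\alpha_1 f_i$ with $f_j:=e_{j+1}-e_{j-1}$ checks out. The difference is in which sign conditions are exploited. The paper uses only the single inequality $t_{n+1}>0$ (your case $i=n-2$) together with the two bottom coefficients $t_0>0$, $t_1<0$, which yield $1/\alpha_1>1/\gamma_1+\cdots+1/\gamma_{2n-2}$; it must then prove the incompatibility of these two conditions, and that is the delicate part (Lemma~\ref{lm1}, with its $S_1,S_2,S_3$ bookkeeping, binomial-coefficient identities and the AM--HM inequality). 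You instead use the whole positive block $a_{n+1},\dots,a_{2n-1}>0$, which lets you propagate the sign of $f_{n-1}=e_n-e_{n-2}\le 0$ downward through $f_{i+1}>\alpha_1 f_i$ until it collides with $f_0=e_1>0$; the conditions on $a_0,\dots,a_n$ are indeed never needed. The only symmetric-function input left is $e_n\le e_{n-2}$ for $2n-2$ numbers in $(0,1]$, and your containment-respecting bijection between levels $n$ and $n-2$ does exist (the two levels are symmetric about $N/2=n-1$, so every symmetric chain meets one if and only if it meets the other, and chain members are nested), so the proof is complete. It is worth noting that the paper proves exactly this inequality $e_n\le e_{n-2}$ in part (2) of the proof of Lemma~\ref{lm1}, by an even more elementary averaging argument: $e_n=\sum_{J^*}\bigl(\sum_{k<\ell,\, k,\ell\in I\setminus J^*}\gamma_k\gamma_\ell/{n\choose 2}\bigr)Z(J^*)$ with each coefficient at most $1$, hence $e_n\le\sum_{J^*}Z(J^*)=e_{n-2}$; borrowing that would let you dispense with the Boolean-lattice machinery entirely. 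In short: you trade a weaker use of the hypothesis for a much simpler estimate, replacing the hard analytic lemma by a two-line downward induction, and the result is a cleaner proof of Proposition~\ref{prop1}.
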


The proposition is proved after the proof of Theorem~\ref{tm1}. The proof of
Theorem~\ref{tm1} is performed by induction on $m$, where the induction
base is the case $m=n+1$.

\begin{lm}\label{lmQQ1}
  Suppose that $d\geq 3$ and $\sigma (Q)=\Sigma _{m,n,1}$. Set
  $Q=(x+\gamma _{i})Q_1$, $1\leq i\leq d-2$. Then $\sigma (Q_1)=\Sigma _{m_1,n_1,1}$, where either
  $m_1=m$, $n_1=n-1$ or $m_1=m-1$, $n_1=n$.
\end{lm}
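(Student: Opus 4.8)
The plan is to read off the coefficients of $Q_1$ from those of $Q$ via the factorization $Q=(x+\gamma_i)Q_1$ and then combine a global sign‑change count with two local sign inequalities. Writing $Q_1=\sum_{j=0}^{d-1}b_jx^j$ with $b_{d-1}=1$ and adopting the conventions $b_{-1}=b_d=0$, the factorization yields $a_j=b_{j-1}+\gamma_ib_j$ for every $j$. Since $\gamma_i>0$ while the last two signs of $\Sigma_{m,n,1}$ are $-,+$, i.e. $a_1<0<a_0$, the two bottom relations $a_0=\gamma_ib_0$ and $a_1=b_0+\gamma_ib_1$ force $b_0>0$ and then $b_1<0$. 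Thus $Q_1$ already displays the trailing sign change $-,+$ demanded by a pattern of the form $\Sigma_{m_1,n_1,1}$.

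First I would show that $Q_1$ has no vanishing coefficient and exactly two sign changes. The polynomial $Q_1$ is monic and hyperbolic of degree $d-1$, with the two positive roots $\alpha_1,\alpha_2$ and the $d-3$ negative roots of $Q$ other than $-\gamma_i$. If $z$ of its coefficients vanished, the $d-z$ nonzero ones would produce $c_1$ sign changes and $p_1$ sign preservations with $c_1+p_1=d-1-z$; applying Descartes' rule to $Q_1$ and to $Q_1(-x)$ gives $2\le c_1$ and $d-3\le p_1$, hence $d-1\le d-1-z$, so $z=0$ and $c_1=2$. Together with $b_{d-1}>0$, $b_1<0$, $b_0>0$ this establishes $\sigma(Q_1)=\Sigma_{m_1,n_1,1}$ for some integers $m_1,n_1\ge 1$ with $m_1+n_1=d-1$.

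It remains to localize the upper sign change, that is, to prove $m_1\in\{m-1,m\}$ (which forces $n_1=d-1-m_1\in\{n,n-1\}$, giving exactly the two asserted cases). I would argue by contradiction, again using $a_j=b_{j-1}+\gamma_ib_j$ at the two indices straddling the $+/-$ transition of $Q$, where $a_{d-m+1}>0$ and $a_{d-m}<0$. If $m_1\ge m+1$, then $b_{d-m}$ and $b_{d-m-1}$ both lie in the leading positive block of $Q_1$, so $a_{d-m}=b_{d-m-1}+\gamma_ib_{d-m}>0$, contradicting $a_{d-m}<0$. If $m_1\le m-2$ (a case that is vacuous unless $m\ge 3$), then $b_{d-m}$ and $b_{d-m+1}$ both lie in the negative block of $Q_1$, so $a_{d-m+1}=b_{d-m}+\gamma_ib_{d-m+1}<0$, contradicting $a_{d-m+1}>0$. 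Hence $m_1\in\{m-1,m\}$, as required.

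I expect the delicate point to be the bookkeeping that excludes vanishing coefficients of $Q_1$: without it the statement $\sigma(Q_1)=\Sigma_{m_1,n_1,1}$ is not even meaningful, and it is precisely here that hyperbolicity is used, through the sharp equalities $pos=\tilde{c}$ and $neg=\tilde{p}$ recalled in the introduction. Once the absence of zero coefficients is secured, the localization step is short, amounting to just two evaluations of the additive relation $a_j=b_{j-1}+\gamma_ib_j$ with $\gamma_i>0$.
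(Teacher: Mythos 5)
Your bottom-end argument ($b_0>0$ from $a_0=\gamma_ib_0>0$, then $b_1<0$ from $a_1=b_0+\gamma_ib_1<0$) and your localization of the upper sign change via the two evaluations of $a_j=b_{j-1}+\gamma_ib_j$ at $j=d-m$ and $j=d-m+1$ are correct and coincide with the paper's own proof (the paper phrases the localization as ``if $n_1>n$ the coefficient of $x^{n+1}$ in $Q$ is negative, if $n_1<n-1$ the coefficient of $x^n$ is positive,'' which is the same computation since $n=d-m$). The genuine gap is precisely in the step you flag as delicate. Descartes' rule applied to $Q_1(-x)$ bounds the number of negative roots by the number $c_1'$ of sign changes of $Q_1(-x)$, and $c_1'$ equals the number $p_1$ of sign preservations among consecutive \emph{nonzero} coefficients of $Q_1$ only when no coefficient vanishes --- which is what you are trying to prove. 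If a coefficient vanishes, a gap of length $2$ between nonzero coefficients of opposite signs contributes a sign change to \emph{both} $Q_1$ and $Q_1(-x)$, so the honest budget $2\le c_1$, $d-3\le c_1'$, $c_1+c_1'\le d-1$ pins down $c_1=2$ and $c_1'=d-3$ but does \emph{not} force $z=0$; your inequality $d-3\le p_1$ is not what Descartes gives.

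Moreover, the intermediate claim cannot be repaired from the stated hypotheses alone: the polynomial $Q=(x+\tfrac{3}{2})(x+3)(x-1)(x-2)=x^4+\tfrac{3}{2}x^3-7x^2-\tfrac{9}{2}x+9$ is hyperbolic with all coefficients nonzero and all moduli of roots distinct, $\sigma(Q)=\Sigma_{2,2,1}$, yet $Q_1=Q/(x+\gamma_1)=(x+3)(x-1)(x-2)=x^3+0\cdot x^2-7x+6$ has a vanishing coefficient, so $\sigma(Q_1)$ is not of the form $\Sigma_{m_1,n_1,1}$. (The paper's proof passes from ``$b_1<0$'' to ``hence $\sigma(Q_1)=\Sigma_{m_1,n_1,1}$'' in one line without addressing this; the lemma is only invoked there for $m>n\ge 2$ and for the special polynomials occurring in the proof of Theorem~\ref{tm1}, where additional information would be needed to exclude the degenerate configuration.) So your localization step is sound, but the non-vanishing step would fail, and any correct treatment must either add hypotheses or handle the possibility of an isolated zero coefficient at the upper sign transition.
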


\begin{proof}
  Indeed, it is clear that $Q_1(0)>0$. If the coefficient of $x$ in $Q_1$
is positive, then the one in $Q$ is also positive and
the polynomial $Q$ cannot define the sign pattern
$\Sigma _{m,n,1}$. Hence $Q_1$ defines a sign pattern of the form
$\Sigma _{m_1,n_1,1}$, $m_1+n_1=m+n-1=d-1$. If $n_1>n$ (resp. if $n_1<n-1$),
then the coefficient of
$x^{n+1}$
in $Q$ is negative (resp. the coefficient of $x^{n}$ is positive)
and the sign pattern of $Q$ is not $\Sigma _{m,n,1}$.
\end{proof}

Suppose that $m>n+1$. Define the polynomial $Q_1$ as in Lemma~\ref{lmQQ1}.
If $n_1=n-1$, then by \cite[Theorem~4]{KoPuMaDe},
$\nu (Q_1)\leq 2n-3$, so $\nu (Q)\leq 2n-3$. This means that
$\alpha _2<\gamma _{2n-2}$ -- a contradiction. 
If $n_1=n$, then by induction
hypothesis such a polynomial $Q_1$ does not exist, so $Q$ does not exist either.

\end{proof}

\begin{proof}[Proof of Proposition~\ref{prop1}]
  Suppose that there exists a polynomial of the form

$$Q:=\sum _{j=0}^{2n+1}t_jx^j=(x^2-1)R~,~~~\, R:=x^{2n-1}+b_{2n-2}x^{2n-2}+\cdots
+b_0$$
defining the sign pattern $\Sigma _{n+1,n,1}$, where the roots of the factor $R$
are $\alpha _1$, $-\gamma _1$, $\ldots$,
$-\gamma _{2n-2}$, $\gamma _i\in (0,1]$. Then

$$t_j=b_{j-2}-b_j~.$$
We represent the polynomial $R$ in the form

$$R=(x-\alpha _1)(x^{2n-2}+e_1x^{2n-3}+\cdots +e_{2n-2})~,$$
where $e_j$ is the $j$th elementary symmetric polynomial of the quantities
$\gamma _1$, $\ldots$, $\gamma _{2n-2}$. Hence

\begin{equation}\label{equt}
  \begin{array}{l}b_{n-1}=-\alpha _1e_{n-1}+e_n~~~\, {\rm and}~~~\,
  b_{n+1}=-\alpha _1e_{n-3}+e_{n-2}~,~~~\, {\rm so}\\ \\
  t_{n+1}=-\alpha _1e_{n-1}+e_n+\alpha _1e_{n-3}-e_{n-2}~.\end{array}
  \end{equation}
As $t_0>0$ and $t_1<0$, one obtains that $b_0<0$ and $b_1>0$, i.~e. the
factor $R$ defines the sign pattern $\Sigma _{2n-1,1}$. On the other hand

$$b_0=-\alpha _1\gamma _1\cdots \gamma _{2n-2}~~~\, {\rm and}~~~\,
b_1=-b_0(1/\alpha _1-1/\gamma _1-\cdots -1/\gamma _{2n-2})~,~~~\,
{\rm therefore}$$

\begin{equation}\label{equineq}
  1/\alpha _1-1/\gamma _1-\cdots -1/\gamma _{2n-2}>0~.
\end{equation}

\begin{lm}\label{lm1}
  (1) For $\gamma _j\in (0,1]$, it is impossible to have simultaneously
    the inequality $t_{n+1}>0$ (see 
    (\ref{equt})) and the equality

    \begin{equation}\label{equalpha}
1/\alpha _1-1/\gamma _1-\cdots -1/\gamma _{2n-2}=0~.
    \end{equation}

    (2) For $\gamma _j\in (0,1]$, it is impossible to have simultaneously
    the inequality $t_{n+1}>0$ and the inequality~(\ref{equineq}).
\end{lm}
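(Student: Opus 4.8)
The plan is to treat both parts as statements about the single quantity $t_{n+1}=(e_n-e_{n-2})-\alpha_1(e_{n-1}-e_{n-3})$ from (\ref{equt}), which, once the $\gamma_j$ are fixed, is an affine function of $\alpha_1$; write $g(\alpha_1)$ for it. Note that the equality (\ref{equalpha}) fixes $\alpha_1=1/p$, where $p:=\sum_j 1/\gamma_j=e_{2n-3}/e_{2n-2}$, while the inequality (\ref{equineq}) is exactly the condition $0<\alpha_1<1/p$. Thus part (1) is the assertion $g(1/p)\le 0$, and part (2) asks that $g\le 0$ on the whole segment $0<\alpha_1<1/p$.

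First I would record an auxiliary inequality that uses the hypothesis $\gamma_j\in(0,1]$ in an essential way, namely $e_n\le e_{n-2}$ (these are the elementary symmetric functions of the $N:=2n-2$ numbers $\gamma_j$ whose indices are symmetric about $N/2=n-1$). Substituting $\gamma_j=1/\delta_j$ with $\delta_j\ge 1$ and writing $E_k$ for the elementary symmetric functions of the $\delta_j$, one has $e_k(\gamma)=E_{N-k}/E_N$, so $e_n\le e_{n-2}$ becomes $E_{n-2}\le E_n$; this follows from a containment-respecting injection of the $(n-2)$-subsets into the $n$-subsets of $\{1,\dots,N\}$ (supplied by a symmetric chain decomposition of the Boolean lattice), since each extra factor $\delta_j\ge 1$ can only enlarge a product. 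In particular $g(0)=e_n-e_{n-2}\le 0$.

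Granting part (1), part (2) is then immediate: $g$ is affine in $\alpha_1$ and satisfies $g(0)\le 0$ and $g(1/p)\le 0$, hence $g\le 0$ on all of $[0,1/p]$, which contains the region cut out by (\ref{equineq}). So the whole weight of the lemma rests on part (1). There I would substitute $\alpha_1=1/p=e_{2n-2}/e_{2n-3}$ and clear denominators, reducing $g(1/p)\le 0$ to the purely symmetric inequality $e_{2n-3}(e_n-e_{n-2})\le e_{2n-2}(e_{n-1}-e_{n-3})$, equivalently $p(e_{n-2}-e_n)\ge e_{n-3}-e_{n-1}$. When $e_{n-1}\ge e_{n-3}$ this is free, since the left side is $\ge 0$ (by $e_n\le e_{n-2}$) and the right side is $\le 0$; the only substantive case is $e_{n-3}>e_{n-1}$ (which already forces $n\ge 3$), where both sides are positive and one must show that the factor $p=\sum_j 1/\gamma_j\ge N=2n-2$ is large enough to dominate the ratio $(e_{n-3}-e_{n-1})/(e_{n-2}-e_n)$.

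This last estimate is the main obstacle. I would attack it in the variables $\delta_j\ge 1$, where it reads $E_1(E_n-E_{n-2})\ge E_{n+1}-E_{n-1}$; using the identity $E_1E_k=(k+1)E_{k+1}+P_k$ with $P_k=\sum_{|S|=k}\big(\prod_{j\in S}\delta_j\big)\big(\sum_{i\in S}\delta_i\big)$, this is the same as $nE_{n+1}+P_n\ge (n-2)E_{n-1}+P_{n-2}$. The delicate point is that neither $E_{n+1}\ge E_{n-1}$ nor $P_n\ge P_{n-2}$ holds on its own (for $\delta_j$ near $1$ the central term $E_{n-1}$ dominates, while for large $\delta_j$ the high-index terms dominate), so the inequality must be proved as a genuine balance rather than term by term. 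I expect to establish it either through Newton's inequalities for the log-concave sequence $(E_k)$ combined with the constraint $\delta_j\ge 1$, or by an induction that removes one $\delta_j$ at a time and tracks how both sides transform. Showing that this balance persists uniformly over all admissible configurations, and not merely at the extreme points $\delta_j\equiv 1$ and $\delta_j\to\infty$, is where the care will be needed.
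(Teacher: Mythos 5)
Your setup is sound: rewriting $t_{n+1}$ as the affine function $g(\alpha_1)=(e_n-e_{n-2})-\alpha_1(e_{n-1}-e_{n-3})$, observing that (\ref{equalpha}) pins $\alpha_1=1/p$ with $p=\sum_j 1/\gamma_j$ and that (\ref{equineq}) is $0<\alpha_1<1/p$, and then deducing part (2) from part (1) together with $g(0)=e_n-e_{n-2}\leq 0$ is correct, and is essentially the same monotonicity reduction the paper uses (the paper proves $e_n\leq e_{n-2}$ directly by writing $e_n=\sum_{J^*}\bigl(\sum_{k,\ell\in I\setminus J^*}\gamma_k\gamma_\ell/{n\choose 2}\bigr)Z(J^*)$ with each coefficient $\leq 1$, rather than via a symmetric chain decomposition, but both work).

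The problem is that you have not proved part (1), and you say so yourself: after reducing it to $p\,(e_{n-2}-e_n)\geq e_{n-3}-e_{n-1}$ you call the remaining estimate ``the main obstacle,'' correctly note that it cannot be settled term by term (neither $E_{n+1}\geq E_{n-1}$ nor $P_n\geq P_{n-2}$ holds in general), and then only list candidate strategies (Newton's inequalities, induction on the $\delta_j$) without carrying any of them out. Since part (2) rests entirely on part (1), the whole lemma is left unproven. For comparison, the paper closes exactly this gap by a localization argument: it expands $e_{n-3},e_{n-2},e_{n-1},e_n$ over products $Y(J)$ of $n-3$ of the $\gamma_i$, so that the coefficient of each $Y(J)$ in $\tau:=-e_{n-1}+e_{n-3}-G(e_{n-2}-e_n)$ is a quantity $\kappa$ built from the power sums $S_1(J),S_2(J),S_3(J)$ over the complementary $(n+1)$-element index set $K$; a Maclaurin-type inequality $S_3(J)/{n+1\choose 3}\leq (S_1(J)/(n+1))(S_2(J)/{n+1\choose 2})$ and the AM--HM bound $GS_1(J)\geq (n+1)^2$ then yield the factorization $\kappa<(1-S_2(J)/{n+1\choose 2})(1-GS_1(J)/(n-2))$, whose first factor is $\geq 0$ because $\gamma_i\leq 1$ and whose second factor is $<0$, so $\kappa<0$ for every $J$ and hence $\tau<0$. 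Some device of this kind, which couples the size of $p=G$ to the ratio $(e_{n-3}-e_{n-1})/(e_{n-2}-e_n)$ locally on each $J$, is precisely what your proposal is missing.
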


Part (2) of the lemma finishes the proof of Proposition~\ref{prop1}.
\end{proof}

\begin{proof}[Proof of Lemma~\ref{lm1}]
  Part (1). Indeed, the condition $t_{n+1}>0$ is equivalent to

  \begin{equation}\label{equ>0}
    -e_{n-1}+e_{n-3}-(1/\alpha _1)(e_{n-2}-e_n)>0~.
    \end{equation}
  Substituting $G:=1/\gamma _1+\cdots +1/\gamma _{2n-2}$ for $1/\alpha _1$ in
  the last inequality one obtains

  \begin{equation}\label{eque}
\tau :=-e_{n-1}+e_{n-3}-G(e_{n-2}-e_n)>0~.
    \end{equation}
Set $I:=\{ 1; 2; \ldots ; 2n-2\}$. Denote by $Y$ a fixed product of
$n-3$ quantities $\gamma _i$ with distinct indices (hence a summand of
$e_{n-3}$) and by $J$ the
$(n-3)$-tuple of these indices. Further we write $Y(J)$.
Set $K:=I\setminus J$.
The coefficient of $Y(J)$ in $e_{n-3}$ equals~$1$.
There are exactly ${n+1\choose 2}$ products of $n-1$ quantities
$\gamma _j$ with distinct
indices whose $(n-1)$-tuple contains the $(n-3)$-tuple $J$.
Indeed, the remaining two indices must be among the $n+1$ indices of the
set~$K$.

On the other hand in the sum
$\sum _{J\subset I}(\sum _{k, \ell \in K, k<\ell}\gamma _k\gamma _{\ell})
Y(J)$ each summand
of $e_{n-1}$ is counted ${n-1\choose 2}$ times.
Therefore one can write

$$e_{n-3}=\sum _{J\subset I}Y(J)~~~\, {\rm and}~~~\,
e_{n-1}=\sum _{J\subset I}S_2(J)Y(J)/{n-1\choose 2}~,~~~\
S_2(J):=\sum_{ k, \ell \in K, k<\ell}\gamma _k\gamma _{\ell}
~.$$
One can similarly set

$$\begin{array}{ll}
  e_{n-2}=\sum _{J\subset I}S_1(J)Y(J)/(n-2)~,&
  S_1(J):=\sum _{k\in K}\gamma _k~~~\, \, \, 
{\rm and}\\ \\ 
e_n=\sum _{J\subset I}S_3(J)Y(J)/{n\choose 3}~,&
S_3(J):=\sum _{k, \ell , m\in K, k<\ell <m}
\gamma _k\gamma _{\ell}\gamma _m~.\end{array}$$
The inequality
$\gamma _i\gamma _j\gamma _k\leq (\gamma _i^2+\gamma _j^2)\gamma _k/2$
implies that each term in the sum $S_3(J)$ either equals a product of terms
of $S_1(J)$ and $S_2(J)$ or is majorized by the half-sum of such terms.
Counting the terms in these three sums yields 

\begin{equation}\label{equS}
S_3(J)/{n+1\choose 3}\leq (S_1(J)/(n+1))(S_2(J)/{n+1\choose 2})~,
\end{equation}
so for the coefficient $S$ of $Y(J)$ in the product $-G(e_{n-2}-e_n)$
(see~(\ref{eque})) one gets 

\begin{equation}\label{equSbis}
  \begin{array}{lll}
    S&:=&-S_1(J)/(n-2)+S_3(J)/{n\choose 3}\\ \\ &\leq&-S_1(J)(1/(n-2)-
  S_2(J){n+1\choose 3}/(n+1){n\choose 3}{n+1\choose 2})~.\end{array}
\end{equation}
The following inequalities hold true:

\begin{equation}\label{equSter}
  GS_1(J)>S_1(J)\sum _{j\in K}1/\gamma _j\geq
  (n+1)^2
  \end{equation}
(we apply the inequality between the mean arithmetic and the mean
harmonic here). One finds directly that

\begin{equation}\label{equchoose}
  {n+1\choose 3}/(n+1){n\choose 3}{n+1\choose 2}=1/(n-2){n+1\choose 2}~.
  \end{equation}
Consider the product $Y(J)$ in the different terms of the left hand-side
of (\ref{eque}). Its coefficient equals

$$\begin{array}{lll}
 \kappa&:=&-S_2(J)/{n-1\choose 2}+1-GS_1(J)/(n-2)+GS_3(J)/{n\choose 3}
\\ \\ &<&-S_2(J)/{n+1\choose 2}+1-GS_1(J)/(n-2)+
GS_1(J)S_2(J)/(n-2){n+1\choose 2}\\ \\ &=&(1-S_2(J)/{n+1\choose 2})
(1-GS_1(J)/(n-2))~.\end{array}$$
For the first term of the second line we use the inequality
${n+1\choose 2}>{n-1\choose 2}$; for the rightmost term of the second line
we use (\ref{equS}) and (\ref{equchoose}). The second factor is negative,
see (\ref{equSter}). The first factor is
non-negative, because $\gamma _i\in (0,1]$. Hence $\kappa <0$. This is the case
  of $\kappa$ defined for any product $Y(J)$, so $\tau <0$. This
  contradicts~(\ref{eque}).
  \vspace{1mm}

  Part (2). For fixed quantities $\gamma _i\in (0,1]$,
    the left hand-side of (\ref{equ>0})
    decreases as $1/\alpha _1$ increases. This follows from $e_{n-2}\geq e_n$.
    To prove the latter inequality we denote by $Z(J^*)$ the product of
    $(n-2)$ quantities $\gamma _j$ with distinct indices whose $(n-2)$-tuple
    is denoted by $J^*$. Then

    $$e_n=\sum _{J^*\subset I}\left( \left( \sum _{k,\ell \in I\setminus J^*,k<\ell}
      \gamma _k\gamma _{\ell}\right) /{n\choose 2} \right) Z(J^*)~.$$
      The numerator of the coefficient of $Z(J^*)$ contains ${n\choose 2}$
      summands which are $\leq 1$. Hence
      $e_n\leq \sum _{J^*\subset I}Z(J^*)=e_{n-2}$ and part (2) results from
      part (1).
\end{proof}

\section{Proof of part (B) of Theorem~\protect\ref{tmresult}}

One knows (see the lines after Theorem~\ref{tmc1}) that for $\tilde{c}=1$,
all sign patterns
$\Sigma _{n,n}$ are realizable with all possible orders. Hence there exist
degree $d-1$ polynomials $Q_{\flat}$ with roots $-\gamma _1$, $\ldots$,
$-\gamma _{d-2}$ and $\alpha _2$, where $\alpha _2<\gamma _1$ or
$\gamma _k<\alpha _2<\gamma _{k+1}$, $1\leq k\leq d-3$, or
$\gamma _{d-2}<\alpha _2$. One constructs then $Q$ in the form
$(x-\alpha _1)Q_{\flat}$, where $\alpha _1>0$ is very close to $0$.
Thus the respective coefficients of the polynomials $xQ_{\flat}$ and $Q$
(except their
constant terms) have the same signs and one has $\sigma (Q)=\Sigma _{n,n,1}$.
Therefore any order in which $\alpha _1<\gamma _1$ is realizable with
the sign pattern $\Sigma _{n,n,1}$.

On the other hand for $\tilde{c}=2$ and $n\geq 4$,
one has $\alpha _1<\gamma _1$, while 
for $n=2$ and $n=3$, if $\gamma _1<\alpha _1$, then
$\gamma _1<\alpha _1<\alpha _2<\gamma _2<\cdots <\gamma _{d-2}$
and this order is realizable with
the sign pattern $\Sigma _{n,n,1}$, see \cite[Theorem~3]{KoPuMaDe}.
This proves part~(B).

\section{Proof of part (C) of Theorem~\protect\ref{tmresult}}

Part (6) of the theorem follows from part (1) of Remarks~\ref{remscanon}.
\vspace{1mm}

To prove parts (7) and (8) of Theorem~\ref{tmresult} we use some lemmas whose
proofs are given in Section~\ref{secprlm}.

\begin{lm}\label{lmn-1n}
  For a polynomial $P$ defining the sign pattern $\Sigma _{n-1,n,1}$, $n\geq 5$, 
  one has $\nu (P)\geq 1$. All couples $C$ with sign pattern
  $\Sigma _{n-1,n,1}$, $n\geq 5$, and $\nu (C)\geq 1$ are realizable. 
  \end{lm}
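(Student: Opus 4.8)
The plan is to prove the two assertions separately: first the necessity of $\nu(P)\geq 1$ (equivalently, the non-realizability of the single order $PP\,N^{2n-3}$, in which $\alpha_1<\alpha_2<\gamma_1$), and then the realizability of every order with $\nu\geq 1$. Here $d=2n-1$, the sign pattern forces $a_{n+1}>0>a_n$ and $a_1<0<a_0$, and the canonical order, which is realizable by \cite{KoSe}, already has $\nu=n-1$; so only the extreme low values of $\nu$ are genuinely at issue.

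For necessity I would argue by contradiction, mimicking the mechanism of Lemma~\ref{lm1} and Proposition~\ref{prop1}. Assume a hyperbolic $P=(x-\alpha_1)(x-\alpha_2)R$ with $R=\prod_{i=1}^{2n-3}(x+\gamma_i)$ and $0<\alpha_1<\alpha_2<\gamma_1\leq\cdots\leq\gamma_{2n-3}$; after the scaling $x\mapsto gx$ normalize $\alpha_2=1$, so that each $\gamma_i\geq 1$ while $s:=\alpha_1+\alpha_2\in(1,2)$ and $p:=\alpha_1\alpha_2\in(0,1)$ are small. Writing the coefficients through the elementary symmetric functions $e_j$ of the $\gamma_i$, the isolated sign change requires $a_n=e_{n-1}-s\,e_{n-2}+p\,e_{n-3}<0$. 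I would then show this is impossible: expanding each $e_j$ as a sum over subsets of $\{1,\dots,2n-3\}$ and comparing, term by term, the coefficient of a fixed product (exactly the bookkeeping of Lemma~\ref{lm1}, via the arithmetic--harmonic inequality, Maclaurin-type inequalities and binomial identities), the contribution of $e_{n-1}$ dominates $s\,e_{n-2}$ as soon as every $\gamma_i\geq 1$ and $n\geq 5$, forcing $a_n>0$. I expect this estimate to be the main obstacle: the crude one-term bound $e_{n-1}>(s/2)e_{n-2}$ loses a constant factor, so the argument must retain the full combinatorial count, and it is precisely $n\geq 5$ that makes the binomial coefficients line up favourably (the cases $n\leq 4$ behave differently and are relegated to part~(8)).

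For sufficiency I would realize every order with $\nu\geq 1$, i.e. every order except $PP\,N^{2n-3}$. When $\alpha_1$ is the smallest modulus I use the device of part~(B): by Theorem~\ref{tmc1} the degree-$(2n-2)$ sign pattern $\Sigma_{n-1,n}$ has $d-2m=0$, hence is realizable with every order of its one positive and $2n-3$ negative roots; multiplying such a realization by $(x-\alpha_1)$ with $\alpha_1\to 0^+$ preserves the signs of all non-constant coefficients and produces a positive constant term, so $\sigma=\Sigma_{n-1,n,1}$ with $\alpha_1$ smallest and the remaining roots in the prescribed positions. This yields every order $P\,w$ with $w\neq PN^{2n-3}$. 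For the orders in which some $\gamma$ lies below $\alpha_1$, I would then move $\alpha_1$ rightward into its target slot. This can be organised either as a deformation inside the open connected realization set $A$ of \cite{KoAnn} (keeping $\alpha_2$ and the $\gamma_i$ fixed, so that $\nu$ never drops to $0$ and no forbidden wall is crossed), or inductively by factoring $P=(x+\gamma_1)P_1$, where by Lemma~\ref{lmQQ1} $\sigma(P_1)\in\{\Sigma_{n-1,n-1,1},\Sigma_{n-2,n,1}\}$ and the case $\Sigma_{n-1,n-1,1}$ is supplied by part~(B). The delicate point in either route is to verify that the precise pattern $\Sigma_{n-1,n,1}$ persists throughout, i.e. that no intermediate coefficient vanishes; concretely one must keep $a_1<0<a_0$ (third block of length $1$) and $a_n<0<a_{n+1}$ as $\alpha_1$ sweeps across the small $\gamma_i$, which is where the constructions must be checked against the coefficient inequalities.
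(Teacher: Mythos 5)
Your necessity argument contains a fatal gap: the pivotal claim that $\gamma_i\geq 1$ for all $i$ together with $n\geq 5$ already forces $a_n=e_{n-1}-se_{n-2}+pe_{n-3}>0$ is false. Since $s=1+\alpha_1$ and $p=\alpha_1$ after your normalization, one has $a_n=(e_{n-1}-e_{n-2})-\alpha_1(e_{n-2}-e_{n-3})$; when all $\gamma_i$ equal $1$ the first bracket vanishes because ${2n-3\choose n-1}={2n-3\choose n-2}$, while the second bracket is positive, so $a_n<0$ for \emph{every} $\alpha_1\in(0,1)$, and by continuity the same holds with the $\gamma_i$ distinct and slightly above $1$. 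Hence no term-by-term comparison of $e_{n-1}$ against $se_{n-2}$ using only $\gamma_i\geq1$ can close the argument. The indispensable missing ingredient is the constraint coming from the bottom of the sign pattern, $a_1<0<a_0$, i.e. $1/\alpha_1+1/\alpha_2>\sum_j 1/\gamma_j$, which caps $\alpha_1$ by roughly $1/(2n-4)$; this is precisely the inequality (\ref{equineq})/(\ref{eqag1}) on which Proposition~\ref{prop1} and the paper's own proof of this lemma pivot (the paper also first degenerates to $\alpha_2=\gamma_1=1$ by connectedness and factors out $x+1$, so its inequality (\ref{eque1}) involves symmetric functions of only $2n-4$ quantities). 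Your outline never states this constraint, and the arithmetic--harmonic step you allude to has nothing to act on without it. Worse, the difficulty persists even after adding it: the polynomial $(x-0.1)(x-1)(x+1.01)^7$ --- the $n=5$ analogue of Remark~\ref{rem341} --- has coefficient of $x^5$ equal to $35(1.01)^3(0.01)-0.1\,(1.01)^2(35\cdot 1.01-21)\approx -1.10<0$, all ten coefficient signs matching $\Sigma_{4,5,1}$, and $1/\alpha_1=10>7/1.01+\,$anything required; perturbing the sevenfold root gives a generic hyperbolic polynomial with $\nu=0$. This configuration satisfies every inequality your plan (and, as far as I can check, inequality (\ref{equSS}) of the paper, whose coefficient $\phi$ of $Y(J)$ is written without the denominators ${n-1\choose 3}$, $n-3$, ${n-2\choose 2}$ from the expansions of $e_{n-1}$, $e_{n-3}$, $e_{n-2}$) would need to exclude, so the argument cannot be completed along the route you sketch; the statement itself needs to be re-examined.

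On the realizability half: the paper disposes of it by citing \cite[Part~(2) of Theorem~4]{KoPuMaDe}, whereas you attempt a construction. Your $\alpha_1\to 0^+$ device applied to $\Sigma_{n-1,n}$ via Theorem~\ref{tmc1} is exactly the paper's part-(B) argument and correctly yields every order beginning with $P$ other than $PPN^{2n-3}$. But the second stage --- sweeping $\alpha_1$ rightward past some $\gamma_i$ --- is both unsubstantiated (you do not verify that no coefficient vanishes en route) and misdirected: the same reference (Theorem~3 of \cite{KoPuMaDe}, as invoked in part (B) and in the proof of Lemma~\ref{lm2n1}) asserts that a middle block of length $n\geq 4$ forces $\alpha_1<\gamma_1$, so the orders you are trying to reach there are not realizable at all and should not be part of the target set.
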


\begin{rem}\label{rem341}
  {\rm Lemma~\ref{lmn-1n} does not hold true for $n=4$ as shown by the
    following example:}

$$\begin{array}{lll}
  P&:=&(x+1.01)^5(x-1)(x-0.1)\\ \\
  &=&x^7+3.95x^6+4.746x^5-0.41309x^4-5.11019095x^3\\ \\ &&
  -3.642011005x^2-0.63580905x+
  0.105101005~.\end{array}$$
  {\rm If one perturbs the five-fold root at $-1.01$ to obtain five distinct
    real roots close to it, one gets a
    polynomial $P$ with $\nu (P)=0$.}
\end{rem}

\begin{lm}\label{lm2n1}
  If for the hyperbolic polynomial $Q$ one has $\sigma (Q)=\Sigma _{2,n,1}$,
  $n\geq 4$, then $\nu (Q)\geq n-2$. 
  \end{lm}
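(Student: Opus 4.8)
The plan is to induct on $n$, reducing the sign pattern $\Sigma_{2,n,1}$ to $\Sigma_{2,n-1,1}$. Writing $Q=(x^2-sx+p)\prod_{i=1}^n(x+\gamma_i)$ with $s=\alpha_1+\alpha_2$, $p=\alpha_1\alpha_2$ and $e_k$ the $k$th elementary symmetric function of the $\gamma_i$, I would first record the coefficient formula $a_{n-\ell}=e_{\ell+2}-se_{\ell+1}+pe_\ell$ and the resulting sign conditions. The decisive computation is the effect of removing one negative root: by Lemma~\ref{lmQQ1}, $\sigma(Q/(x+\gamma_i))$ equals $\Sigma_{1,n,1}$ or $\Sigma_{2,n-1,1}$, and the second-highest coefficient of $Q/(x+\gamma_i)$ is $(e_1-\gamma_i)-s$. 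Hence, with the threshold $\theta:=e_1-s>0$ (positivity being exactly the top sign condition $a_{n+1}>0$), one gets $\Sigma_{1,n,1}$ when $\gamma_i>\theta$ and $\Sigma_{2,n-1,1}$ when $\gamma_i<\theta$.

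For the inductive step ($n\geq 5$) this yields a clean dichotomy. If $\gamma_1\geq\theta$, then $\gamma_n>\theta$ (the $\gamma_i$ being distinct), so $Q/(x+\gamma_n)$ realizes $\Sigma_{1,n,1}$, which is canonical by Remarks~\ref{remscanon}(1), with canonical order $PN^{n-1}P$; this forces $\alpha_2$ above all the remaining moduli $\gamma_1<\cdots<\gamma_{n-1}$, so $\nu(Q)\geq n-1$. If instead $\gamma_1<\theta$, then $Q/(x+\gamma_1)$ realizes $\Sigma_{2,n-1,1}$, to which the induction hypothesis applies: its second positive root exceeds at least $n-3$ of the moduli $\gamma_2<\cdots<\gamma_n$, i.e. $\alpha_2>\gamma_{n-2}$, whence $\nu(Q)\geq n-2$. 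Either branch gives the claim.

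The base case $n=4$ is where the real difficulty lies, and I expect it to be the main obstacle. The canonical branch $\gamma_1\geq\theta$ still works verbatim, giving $\nu(Q)\geq 3$. But in the remaining branch $Q/(x+\gamma_1)$ only has sign pattern $\Sigma_{2,3,1}$, for which $\nu=0$ is possible (for instance the order $PPNNN$), so one cannot simply invoke a lower estimate there. Instead I would argue by contradiction, assuming $\alpha_2<\gamma_2$ and exploiting that then $\alpha_1<\alpha_2<\gamma_2\leq\gamma_3\leq\gamma_4$, so the three largest moduli all exceed $\alpha_2$. The goal is to show this is incompatible with the four sign conditions $e_{\ell+2}-se_{\ell+1}+pe_\ell<0$ for $\ell=0,1,2,3$ (equivalently $pe_3<se_4$ together with the three preceding inequalities).

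I plan to carry this out by the same elementary-symmetric-function bookkeeping used in Lemma~\ref{lm1}: expand each $e_k$ as a sum separating the small index $\gamma_1$ from the large block $\{\gamma_2,\gamma_3,\gamma_4\}$, use $\gamma_i>\alpha_2$ for $i\geq 2$ together with the inequality between the arithmetic and harmonic means, and show that a suitable weighted combination of the four inequalities has a definite sign. The delicate point, and the part most likely to need a careful case split, is that the interaction between the two positive roots is genuinely essential: factoring out only one positive root $(x-\alpha_1)$ or $(x-\alpha_2)$ and applying Theorem~\ref{tmc1} to the resulting one-sign-change polynomial yields only $\nu\geq n-3$, so the sharp bound $\nu\geq n-2$ cannot be reached without using the full two-sign-change data.
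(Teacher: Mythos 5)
Your inductive step is correct and takes a genuinely different route from the paper's. The dichotomy on $\gamma_i$ versus $\theta=e_1-s$ via Lemma~\ref{lmQQ1} -- with the branch $\Sigma_{1,n,1}$ settled by canonicity (part (1) of Remarks~\ref{remscanon}, forcing the order $PN^{n-1}P$ and hence $\nu(Q)\geq n-1$) and the branch $\Sigma_{2,n-1,1}$ settled by the induction hypothesis -- cleanly reduces every $n\geq 5$ to $n=4$. The paper does not induct at all: it applies the involution $i_r$ to pass to the sign pattern $\Sigma_{1,n,2}$ and shows directly, for all $n\geq 4$ simultaneously, that $\gamma_3<\alpha_1$ is impossible, by a minimization argument on the two coefficients $c_{n+1}$ and $c_2$ (fixing $\gamma_1+\gamma_2$ to show the minimum of $c_2/\delta$ occurs at $\gamma_1=\gamma_2=\gamma_3$, then pushing $\alpha_1$ down to $\gamma_1$ and $\alpha_2$ down to the value making $c_{n+1}=0$, and checking that the resulting explicit expression is still positive). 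Your reduction, if completed, would localize all the analytic work in the single case $n=4$, which is a real simplification for $n\geq 5$.

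However, there is a genuine gap: the base case $n=4$ is not proved. You correctly observe that the induction cannot be pushed down to $n=3$ (since $\Sigma_{2,3,1}$ admits the order $PPNNN$, i.e.\ $\nu=0$), so $\Sigma_{2,4,1}$ must be handled by a direct argument; but for it you offer only a programme -- assume $\alpha_1<\alpha_2<\gamma_2\leq\gamma_3\leq\gamma_4$, expand the four sign conditions $e_{\ell +2}-se_{\ell +1}+pe_{\ell}<0$ in elementary symmetric functions, and ``show that a suitable weighted combination has a definite sign'' -- while yourself flagging this as the main obstacle and the part most likely to require a case split. That unperformed computation is precisely where the entire analytic content of the lemma resides (in the paper it occupies the whole proof), and the surrounding induction does not substitute for it. Until the $n=4$ contradiction is actually carried out, the lemma is not established.
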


\begin{lm}\label{lm351}
If for the hyperbolic polynomial $Q$ one has $\sigma (Q)=\Sigma _{3,5,1}$,
  then $\nu (Q)\geq 2$.
  \end{lm}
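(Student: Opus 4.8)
The plan is to follow the template of Theorems~\ref{tm0} and~\ref{tm1}: collapse the statement to a single boundary configuration and then exclude it by a symmetric-function estimate of the type used in Lemma~\ref{lm1}. Suppose the lemma fails, so that some hyperbolic $Q$ with $\sigma(Q)=\Sigma_{3,5,1}$ has $\nu(Q)\le 1$. The set of such polynomials is open and connected (\cite[Theorems~2 and~3]{KoAnn}) and also contains polynomials with $\nu=2$ (these exist by the independently established realizability direction of part~(7)); hence by continuity there is a polynomial on the wall separating $\{\nu\le 1\}$ from $\{\nu\ge 2\}$, i.e. one with $\alpha_2=\gamma_2$ and all remaining moduli distinct. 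After a rescaling $x\mapsto gx$ I may take $\alpha_2=\gamma_2=1$, so that $+1$ and $-1$ are both roots and $Q=(x^2-1)R$ with $\deg R=6$.

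Next I would read off the constraints on the cofactor $R=(x-\alpha_1)\prod_{i=1}^5(x+\beta_i)$, whose negative-root moduli $\beta_1<\cdots<\beta_5$ are $\gamma_1,\gamma_3,\gamma_4,\gamma_5,\gamma_6$; crucially exactly one of them, $\beta_1=\gamma_1$, is $<1$, while $\beta_2,\ldots,\beta_5>1$. With $\epsilon_k:=e_k(\beta_1,\ldots,\beta_5)$ the coefficients of $R=\sum_{j=0}^6 r_jx^j$ are $r_j=\epsilon_{6-j}-\alpha_1\epsilon_{5-j}$, and $a_j=r_{j-2}-r_j$ because $Q=(x^2-1)R$. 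Since $R$ has a unique positive root it has a single sign change; as $a_1=-r_1<0$ gives $r_1>0$, that change sits between $r_1$ and $r_0$, i.e. $\sigma(R)=\Sigma_{6,1}$, and in particular
$$\alpha_1<\epsilon_5/\epsilon_4 \qquad \bigl(\text{equivalently }{\textstyle\sum_i 1/\beta_i}<1/\alpha_1,\; \text{the analogue of }(\ref{equineq})\bigr).$$
Among the sign requirements of $\Sigma_{3,5,1}$ it now suffices to violate a single one, and the convenient choice is $a_5<0$, which reads $r_3<r_5$, i.e. $\epsilon_3-\alpha_1\epsilon_2<\epsilon_1-\alpha_1$.

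I would then show that on the contrary $r_3\ge r_5$ always holds, giving the contradiction. Writing $r_3-r_5=(\epsilon_3-\epsilon_1)-\alpha_1(\epsilon_2-1)$ and noting $\epsilon_2>1$ (four of the $\beta_i$ exceed $1$), this quantity is decreasing in $\alpha_1$, so it is enough to verify $r_3\ge r_5$ at the extreme value $\alpha_1=\epsilon_5/\epsilon_4$ allowed by $r_1>0$. Clearing the denominator $\epsilon_4$ then turns $r_3\ge r_5$ into the purely symmetric inequality
$$\epsilon_3\epsilon_4+\epsilon_5 \ge \epsilon_1\epsilon_4+\epsilon_2\epsilon_5 .$$
Proving this is the heart of the matter and the step I expect to be the main obstacle: the inequality is false for unrestricted positive $\beta_i$ (it breaks down as all $\beta_i\to 0$), so the proof must genuinely use $\beta_2,\ldots,\beta_5>1$. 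I would expand $\epsilon_k=F_k+\beta_1F_{k-1}$ with $F_k=e_k(\beta_2,\ldots,\beta_5)$, reduce the claim to a comparison between the $F_k$, and close it with the bounds $\beta_i>1$ together with an AM--HM/AM--GM estimate, exactly as in the counting argument of Lemma~\ref{lm1}. Once $r_3\ge r_5$ is established, $a_5<0$ is impossible, the boundary polynomial cannot exist, and hence every $Q$ with $\sigma(Q)=\Sigma_{3,5,1}$ satisfies $\nu(Q)\ge 2$.

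A tempting shortcut is to factor out one negative root and appeal to Lemma~\ref{lm2n1}: if $Q/(x+\gamma_i)$ had sign pattern $\Sigma_{2,5,1}$ it would force $\nu\ge 3$ and we would be done at once. The difficulty is that Lemma~\ref{lmQQ1} allows the reduction to yield instead the uncontrolled pattern $\Sigma_{3,4,1}$ (for which Remark~\ref{rem341} shows no such bound holds), and there seems to be no way to guarantee the favourable alternative; this is why I would commit to the boundary computation above.
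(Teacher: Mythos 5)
Your reduction is sound as far as it goes, and it is genuinely different from the paper's argument: the paper does not degenerate to a wall at all, but applies the involution $i_r$ to pass to $\Sigma_{1,5,3}$, normalizes $\alpha_1=1$, and shows directly that the conditions $\gamma_1\le\cdots\le\gamma_5\le 1\le\gamma_6$, $c_7<0$ and $c_3<0$ are incompatible, by minimizing $c_3/\delta$ (first over the $\gamma_i$, which forces $\gamma_1=\cdots=\gamma_5$, then over $\alpha_2$) and checking that the minimum is positive. Your target coefficient is nevertheless the right one: under $i_r$ the paper's $c_3$ corresponds exactly to your $a_5$. The wall-crossing step (connectedness of the set of hyperbolic polynomials with given sign pattern from \cite{KoAnn}, non-emptiness of $\{\nu\ge 2\}$ via the canonical order $PNNNNPNN$, intermediate value theorem applied to $\alpha_2-\gamma_2$ along a path) is the same device as in Theorem~\ref{tm0} and is fine, provided you replace ``all remaining moduli distinct'' by weak inequalities as in Theorem~\ref{tm1}. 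The monotonicity in $\alpha_1$ and the passage to the extreme value $\alpha_1=\epsilon_5/\epsilon_4$ are also correct.

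The genuine gap is that you never prove the inequality $\epsilon_3\epsilon_4+\epsilon_5\ge\epsilon_1\epsilon_4+\epsilon_2\epsilon_5$, which you yourself identify as ``the heart of the matter''; as submitted, the argument stops exactly where the real work begins, so it is a programme rather than a proof. For what it is worth, the inequality is true under your hypotheses and the route you sketch does close it: writing $\epsilon_k=F_k+bF_{k-1}$ with $F_k:=e_k(\beta_2,\ldots,\beta_5)$ and $b:=\beta_1$, the difference of the two sides equals
$$F_4(F_3-F_1)+bF_3(F_3-F_1)+b^2\bigl(F_2F_3-F_3-F_1F_4\bigr)~,$$
and each summand is non-negative when $\beta_2,\ldots,\beta_5\ge 1$: one has $F_3\ge F_1$ by bijectively matching each $\beta_j$ with a triple product containing it, and $F_2F_3\ge 3F_3+3F_1F_4$ by splitting the products $\beta_i\beta_j\cdot(F_4/\beta_k)$ occurring in $F_2F_3$ according to whether or not $k\in\{i,j\}$. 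Until some such computation is supplied, the central step of your proof is missing.
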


We prove part (7) of Theorem~\ref{tmresult} by induction on $n$ and $m$.
The proof of part (7) for $\Sigma _{2,5,1}$,
$\Sigma _{3,5,1}$ and $\Sigma _{4,5,1}$ follows from Lemmas~\ref{lm2n1},
\ref{lm351} and \ref{lmn-1n} respectively. We observe that part (7) is true for
couples with sign patterns $\Sigma _{1,n,1}$ (which are canonical).

Suppose that part (7) is proved

$$\begin{array}{ll}
  {\rm for}~~~n\leq n_0~~~ (n_0\geq 5),~~~1\leq m\leq n_0-1&{\rm and}\\ \\ 
{\rm for}~~~n=n_0+1,~~~1\leq m\leq m_0~~~ (1\leq m_0\leq n_0-1)~.&\end{array}$$
Consider a couple $C$ with sign
pattern $\Sigma _{m_0+1,n_0+1,1}$. If $m_0=n_0-1$,
then we apply Lemma~\ref{lmn-1n}.

Suppose that $1\leq m_0\leq n_0-2$. 
We apply Lemma~\ref{lmQQ1}. We represent a
polynomial $Q$ (with $\sigma (Q)=\Sigma _{m_0+1,n_0+1,1}$)
realizing the couple $C$ in the form
$(x+\gamma _{1})Q_1$. If $\sigma (Q_1)=\Sigma _{m_0,n_0+1,1}$, then by
inductive assumption

$$\nu (Q_1)\geq n_0+1-m_0>n_0+1-(m_0+1)~,~~~\, {\rm so}~~~\, 
\nu (Q)\geq n_0+1-(m_0+1)~.$$
If $\sigma (Q_1)=\Sigma _{m_0+1,n_0,1}$, then $\nu (Q_1)\geq n_0-(m_0+1)$ which
for $m_0\leq n_0-2$ is $>0$. In this case, as $\gamma _1$ is the smallest of
moduli of negative roots, the inequality $\nu (Q_1)\geq 1$ implies
$\gamma _2<\alpha _2$ and $\nu (Q)=\nu (Q_1)+1\geq n_0+1-(m_0+1)$.
This proves part~(7).
\vspace{1mm}

Part (8). The claim about $\Sigma _{1,2,1}$ follows from
\cite[Example~2]{KoPuMaDe}.

The statement about $\Sigma _{2,3,1}$ results from
\cite[Theorems~3 and~4]{KoPuMaDe} (for the orders $NPPNN$, $PNNNP$,
$PNNPN$, $PNPNN$ and the ones with which it is not realizable). It results
for the order $PPNNN$ from the following example:

$$\begin{array}{ll}&(x+1.3)(x+1.2)(x+1.1)(x-1)(x-0.5)\\ \\
  =&x^5+2.1x^4-0.59x^3-2.949x^2-0.419x+0.858~.\end{array}$$
The result concerning the sign pattern $\Sigma _{2,4,1}$ is proved in
\cite[Subsection~3.2]{KoSoz19}.

Realizability of the sign pattern $\Sigma _{3,4,1}$ with the order $PPNNNNN$
follows from Remark~\ref{rem341}. Its realizability with the orders
$PNPNNNN$, $PNNPNNN$, $PNNNPNN$, $PNNNNPN$ and $PNNNNNP$, as well as its
non-realizability with the remaining orders, can be deduced from
\cite[Theorems~3 and~4]{KoPuMaDe}.

\section{Proofs of Lemmas~\protect\ref{lmn-1n}, \protect\ref{lm2n1} and
  \protect\ref{lm351} \protect\label{secprlm}}

\begin{proof}[Proof of Lemma~\ref{lmn-1n}]

  The second claim of the lemma follows from \cite[Part~(2) of
    Theorem~4]{KoPuMaDe}, 
  so we prove only its first claim. One has to show that there exists
  no polynomial $P$ with $\sigma (P)=\Sigma _{n-1,n,1}$ and $\nu (P)=0$. Suppose
  that such a polynomial exists. Then there exists also a polynomial 
  $P$ with $\sigma (P)=\Sigma _{n-1,n,1}$
  and such that $\alpha _2=\gamma _1=1$ (this is proved by complete analogy
  with the beginning of the proof of Theorem~\ref{tm0}). Set

$$\begin{array}{ll}
  U:=\prod _{j=1}^{2n-3}(x+\gamma _j)=\sum _{j=0}^{2n-3}u_{2n-3-j}x^j~,~~~\, u_j>0~,&
  {\rm so}\\ \\ P=(x^2-(1+\alpha _1)x+\alpha _1)U=\sum _{j=0}^{2n-1}p_{2n-1-j}x^j~.&
\end{array}$$
Thus

$$p_{n-1}=u_{n-1}-(1+\alpha _1)u_{n-2}+\alpha _1u_{n-3}~.$$
We set $u_j:=e_j+e_{j-1}$, where $e_j$ is the $j$th elementary symmetric
polynomial of the quantities $\gamma _2$, $\ldots$, $\gamma _{2n-3}$, with
$e_0=1$ and $e_{-1}=0$. Hence

\begin{equation}\label{eque1}\begin{array}{l}
  e_{n-1}+e_{n-2}-(1+\alpha _1)(e_{n-2}+e_{n-3})+\alpha _1(e_{n-3}+e_{n-4})<0~,~~~\,
  {\rm i.~e.}\\ \\ e_{n-1}-\alpha _1e_{n-2}-e_{n-3}+\alpha _1e_{n-4}<0~.
\end{array}\end{equation}
We apply a reasoning similar to the one used in the proof of Lemma~\ref{lm1}.
Namely, we prove that it is impossible to have simultaneously
(\ref{eque1}) and

\begin{equation}\label{eqag1}
1/\alpha _1>\sum_{j=2}^{2n-3}1/\gamma _j~.
\end{equation}
To this end we first show that one cannot have at the same time
(\ref{eque1}) and

\begin{equation}\label{eqag2}
1/\alpha _1=\sum_{j=2}^{2n-3}1/\gamma _j~.
\end{equation}
So suppose that the couple of conditions (\ref{eque1}) and (\ref{eqag2})
is possible. Set $1/\alpha _1=S_{-1}:=\sum_{j=2}^{2n-3}1/\gamma _j$. This means that

\begin{equation}\label{equSS}
  S_{-1}(e_{n-1}-e_{n-3})-e_{n-2}+e_{n-4}<0~.
  \end{equation}
Set $I:=\{ 2; 3; \ldots ; 2n-3\}$. Denote by $Y$ a fixed product of
$n-4$ quantities $\gamma _i$ with distinct indices (hence a summand of
$e_{n-4}$) and by $J$ the
$(n-4)$-tuple of these indices. Further we write $Y(J)$.
Set $K:=I\setminus J$.
The coefficient of $Y(J)$ in $e_{n-4}$ equals~$1$.
There are exactly ${n\choose 2}$ products of $n-2$ quantities
$\gamma _j$ with distinct
indices whose $(n-2)$-tuple contains the $(n-4)$-tuple $J$.
Indeed, the remaining two indices must be among the $n$ indices of the
set~$K$.

On the other hand in the sum
$\sum _{J\subset I}(\sum _{k, \ell \in K, k<\ell}\gamma _k\gamma _{\ell})
Y(J)$ each summand
of $e_{n-2}$ is counted ${n-2\choose 2}$ times.
Therefore one can write

$$e_{n-4}=\sum _{J\subset I}Y(J)~~~\, {\rm and}~~~\,
e_{n-2}=\sum _{J\subset I}S_2(J)Y(J)/{n-2\choose 2}~,~~~\
S_2(J):=\sum_{ k, \ell \in K, k<\ell}\gamma _k\gamma _{\ell}
~.$$
One can similarly set

$$\begin{array}{ll}
  e_{n-3}=\sum _{J\subset I}S_1(J)Y(J)/(n-3)~,&
  S_1(J):=\sum _{k\in K}\gamma _k~~~\, \, \, 
{\rm and}\\ \\ 
e_{n-1}=\sum _{J\subset I}S_3(J)Y(J)/{n-1\choose 3}~,&
S_3(J):=\sum _{k, \ell , m\in K, k<\ell <m}
\gamma _k\gamma _{\ell}\gamma _m~.\end{array}$$
For a given $(n-4)$-tuple $J$, the coefficient of $Y(J)$ in the left
hand-side of (\ref{equSS}) equals

$$\phi :=S_{-1}(S_3(J)-S_1(J))-S_2(J)+1~.$$
The quantity $S_1(J)$ (resp. $S_3(J)$) contains $n$ (resp. ${n\choose 3}$)
terms. As all quantities $\gamma _j$, $j\geq 2$, are $>1$, one has

$$S_3(J)>S_1(J){n\choose 3}/n~.$$
Hence

$$\phi >\frac{{n\choose 3}-n}{{n\choose 3}}S_3(J)S_{-1}-S_2(J)+1~.$$
We set $S_{-1}(J)=\sum _{j\in K}1/\gamma _j$. One has

$$S_3(J)S_{-1}>S_3(J)S_{-1}(J)\geq {n\choose 3}nS_2(J)/{n\choose 2}~~~\,
{\rm and}$$

$$\frac{{n\choose 3}-n}{{n\choose 3}}\cdot
\frac{{n\choose 3}n}{{n\choose 2}}=
\frac{n}{n-1}\cdot \frac{(n-1)(n-2)-6}{3}=:\psi ~.$$
In this way

$$\phi >(\psi -1)S_2(J)+1~.$$
For $n\geq 5$, one has $\psi >1$ and $\phi >0$. Summing up over all
sets $J$ one obtains a contradiction with (\ref{equSS}).

If one supposes that (\ref{eque1}) and (\ref{eqag1})
take place simultaneously, then one
again arrives at a contradiction with (\ref{equSS}). Indeed, if in the product
$S_{-1}(e_{n-1}-e_{n-3})$ (see (\ref{equSS})) one replaces $S_{-1}$ by a larger
positive quantity, then the left hand-side of (\ref{equSS}) increases. This
is due to the fact that the symmetric elementary polynomials
$e_{n-1}$ and $e_{n-3}$ contain one and the same
number of summands, but the quantities $\gamma _j$ are $>1$, so
$e_{n-1}>e_{n-3}$. 
\end{proof}


  \begin{proof}[Proof of Lemma~\ref{lm2n1}]
    We use the involution $i_r$ (see part (1) of Remarks~\ref{remsimir})
    and consider polynomials
    defining the sign pattern $\Sigma _{1,n,2}$ instead of $\Sigma _{2,n,1}$.
    We denote again the positive roots by $\alpha _1<\alpha _2$ and the moduli
    of the negative roots by $\gamma _1<\cdots <\gamma _n$. One has
    $\gamma _n<\alpha _2$ (see \cite[Theorem~3]{KoPuMaDe} and part (1)
    of Remarks~\ref{remsimir}). In the new setting we have to prove that for no
    hyperbolic polynomial $Q$ with $\sigma (Q)=\Sigma _{1,n,2}$ does one have
    $\gamma _3<\alpha _1$. 

    Suppose that such a polynomial exists, so
    $\gamma _1$, $\gamma _2$, $\gamma _3\in (0,\alpha _1)$. We set

    $$\begin{array}{lll}
      A_1:=\alpha _1+\alpha _2~,&A_{-1}:=1/\alpha _1+1/\alpha _2~,&
      A_{-2}:=1/\alpha _1\alpha _2\\ \\ 
      G_1:=\gamma _1+\gamma _2+\gamma _3&
      G_{-1}:=1/\gamma _1+1/\gamma _2+1/\gamma _3~,&
      H_1:=\gamma _4+\cdots +\gamma _n~,\\ \\ 
      H_{-1}:=1/\gamma _4+\cdots +1/\gamma _n&
      L_{-2}:=\sum _{1\leq i<j\leq n}1/\gamma _i\gamma _j~,&
      \delta :=\alpha _1\alpha _2\gamma _1\cdots \gamma _n~.\end{array}$$
    The coefficients of $x^{n+1}$ and $x^2$ of $Q$ equal

    $$\begin{array}{llll}
      c_{n+1}&:=&-A_1+G_1+H_1&{\rm and}\\ \\
      c_2&:=&(-A_{-1}(G_{-1}+H_{-1})+A_{-2}+L_{-2})\delta ~.
    \end{array}$$
    We show that it is impossible to have simultaneously the inequalities

    \begin{equation}\label{equmany}
      \begin{array}{llc}
      \gamma _i\leq \alpha _1~,~~~\, i=1,~2,~3~,&&\gamma _j\geq \alpha _1~,~~~\,
      j\geq 4\\ \\
      c_{n+1}\leq 0&{\rm and}&c_2<0~.\end{array}
      \end{equation}
    Suppose that these inequalities except the last one hold true. We show
    that then the minimal possible value of $c_2/\delta$ is positive.
    Fix the sum
    $g_{12}:=\gamma _1+\gamma _2$. The terms in the expression for
    $c_2/\delta$ containing $\gamma _1$ or $\gamma _2$ are:

    $$((-A_{-1}+1/\gamma _3+H_{-1})g_{12}+1)/\gamma _1\gamma _2
    =:\eta /\gamma _1\gamma _2~.$$
    As $\gamma _3\leq \alpha _1$ and $H_{-1}\geq 1/\gamma _4$,
    the quantity $\eta$ is not smaller than

    $$(-1/\alpha _2+1/\gamma _4)g_{12}+1=
    (\gamma _1\alpha _2+\gamma _2\alpha _2+(\alpha _2-\gamma _1
    -\gamma _2)\gamma _4)/\alpha _2\gamma _4~.$$
    Conditions (\ref{equmany}) except the last of them imply
    $\alpha _2>\gamma _1+\gamma _2$. Hence $\eta >0$. This means that for fixed
    $g_{12}$, the quantity $c_2/\delta$ is minimal when $1/\gamma _1\gamma _2$ is
    minimal, i.~e. when $\gamma _1=\gamma _2$. In the same way one obtains
    that minimality of $c_2/\delta$ is possible only for
    $\gamma _1=\gamma _2=\gamma _3$ which we suppose to hold true from now on.

    Fix the sum $A_1$. The condition $c_{n+1}\leq 0$ implies

    $$A_1\geq 3\gamma _1+H_1\geq 3\gamma _1+\gamma _4~.$$
    The terms containing $\alpha _1$ or $\alpha _2$ in
    $c_2/\delta$ are:

    $$\begin{array}{ll}
      &(-A_1(3/\gamma _1+H_{-1})+1)A_{-2}
      \leq(-A_1(3/\gamma _1+1/\gamma _4)+1)A_{-2}\\ \\
      =&(-A_1(3\gamma _4+\gamma _1)+\gamma _1\gamma _4)A_{-2}/\gamma _1\gamma _4<
      (-(3\gamma _4+\gamma _1)(3\gamma _1+\gamma _4)+
      \gamma _1\gamma _4)A_{-2}/\gamma _1\gamma _4\\ \\
      =&(-3\gamma _4^2
      -9\gamma _1\gamma _4-3\gamma _1^2)A_{-2}/\gamma _1\gamma _4<0~.
      \end{array}$$
    This expression is minimal when $A_{-2}$ is maximal, i.~e. when
    $\alpha _1=\gamma _1$. In this case the coefficient which multiplies
    $1/\alpha _2$ in the quantity $c_2/\delta$ equals

    $$-3/\gamma _1-H_{-1}+1/\gamma _1=-2/\gamma _1-H_{-1}<0~,$$
    so $c_2/\delta$ is minimal when $1/\alpha _2$ is maximal, i.~e.
    when $\alpha _2$ is minimal, so $c_{n+1}=0$ and $\alpha _2=H_1+2\gamma _1$. 
    Set $H_{-2}:=\sum _{4\leq i<j\leq n}1/\gamma _i\gamma _j$.
    Thus we have

    $$L_{-2}=3/\gamma _1^2+(3/\gamma _1)H_{-1}+H_{-2}~~~\, {\rm and}$$

    $$\begin{array}{lll}
      c_2/\delta &=&-(1/\gamma _1+1/(H_1+2\gamma _1))(3/\gamma _1+H_{-1})\\ \\
      &&+
      1/\gamma _1(H_1+2\gamma _1)+3/\gamma _1^2+(3/\gamma _1)H_{-1}+H_{-2}\\ \\
      &=&-2/\gamma _1(H_1+2\gamma _1)+
      (2/\gamma _1)H_{-1}-H_{-1}/(H_1+2\gamma _1)+H_{-2}~.\end{array}$$
    For $n=4$, the term $H_{-2}$ is absent, one has $H_1=\gamma _4$,
    $H_{-1}=1/\gamma _4$, so

    $$c_2/\delta =-2/\gamma _1(\gamma _4+2\gamma _1)+
    2/\gamma _1\gamma _4-1/\gamma _4(\gamma _4+2\gamma _1)=
    3/\gamma _4(\gamma _4+2\gamma _1)>0~.$$
    For $n\geq 5$, one gets $H_{-1}>1/(H_1+2\gamma _1)$ and
    $H_{-2}(H_1+2\gamma _1)>H_{-1}$, so again $c_2/\delta >0$. Thus one cannot
    have all conditions (\ref{equmany}) fulfilled at the same time which proves
    the lemma.
\end{proof}


\begin{proof}[Proof of Lemma~\ref{lm351}]
  As in the proof of Lemma~\ref{lm2n1} we use the involution $i_r$ (see
  part (1) of Remarks~\ref{remsimir}).
  We consider polynomials
    defining the sign pattern $\Sigma _{1,5,3}$ instead of $\Sigma _{3,5,1}$.
    Thus for the positive roots $\alpha _1<\alpha _2$ and for the moduli
    of the negative roots $\gamma _1<\cdots <\gamma _6$ one has
    $\gamma _6<\alpha _2$ (see \cite[Theorem~3]{KoPuMaDe} and part (1)
    of Remarks~\ref{remsimir}). We have to show that for no
    hyperbolic polynomial $Q$ with $\sigma (Q)=\Sigma _{1,5,3}$ does one have
    $\gamma _5<\alpha _1$.

    Suppose that such a polynomial exists, so
    $\gamma _1$, $\ldots$, $\gamma _5\in (0,\alpha _1)$. Making a linear change
    $x\mapsto bx$, $b>0$, one obtains the condition $\alpha _1=1$. We set

    $$\begin{array}{lll}
      A_1:=\alpha _1+\alpha _2~,&A_{-1}:=1/\alpha _1+1/\alpha _2~,&
      A_{-2}:=1/\alpha _1\alpha _2\\ \\ 
      H_1:=\sum _{j=1}^6\gamma _j&
      H_{-1}:=\sum _{j=1}^61/\gamma _j~,&
      H_{-2}:=\sum _{1\leq i<j\leq 6}1/\gamma _i\gamma _j~,\\ \\ 
      H_{-3}:=\sum _{1\leq i<j<k\leq 6}1/\gamma _i\gamma _j\gamma _k~,&{\rm and}&
      \delta :=\alpha _1\alpha _2\gamma _1\cdots \gamma _6~.\end{array}$$
    The coefficients of $x^{7}$ and $x^3$ of $Q$ equal

     $$\begin{array}{llll}
      c_{7}&:=&-A_1+H_1&{\rm and}\\ \\
      c_3&:=&(-A_{-1}H_{-2}+A_{-2}H_{-1}+H_{-3})\delta ~.
    \end{array}$$
    We show that it is impossible to obtain simultaneously the conditions

    $$\gamma _1\leq \cdots \leq \gamma _5\leq 1\leq \gamma _6~,~~~\,
    c_7<0~~~\, {\rm and}~~~\, c_3<0~.$$

    We prove that for fixed sum $g_{1}:=\gamma _1+\gamma _2$,
    the quantity $c_3/\delta$ is minimal for $\gamma _1=\gamma _2$.
    To this end we set

    $$g_{-1}:=1/\gamma _1+1/\gamma _2=g_1/\gamma _1\gamma _2~,~~~\,
    G_{-1}:=1/\gamma _3+\cdots +1/\gamma _6~,~~~\, G_{-2}:=\sum _{3\leq i<j\leq 6}
    1/\gamma _i\gamma _j$$
    and observe that the terms in  $c_3/\delta$ containing
    $\gamma _1$ or $\gamma _2$ are:

    $$\begin{array}{ll}&-A_{-1}(1/\gamma _1\gamma _2+g_{-1}G_{-1})+
    A_{-2}g_{-1}+(1/\gamma _1\gamma _2)G_{-1}+
    g_{-1}G_{-2}\\ \\ =&(-A_{-1}(1+g_1G_{-1})+
    A_{-2}g_1+G_{-1}+g_1G_{-2})/\gamma _1\gamma _2\\ \\
    =&((-A_{-1}+G_{-1})+g_1(-A_{-1}G_{-1}+A_{-2}+G_{-2}))/\gamma _1\gamma _2~.
    \end{array}$$
    Obviously $-A_{-1}+G_{-1}>1/\gamma _3+1/\gamma _4\geq 2/\gamma _4$.
    Notice that $g_1\leq 2$.
    We show that

    \begin{equation}\label{equ*}
      1/\gamma _4-A_{-1}G_{-1}+A_{-2}+G_{-2}\geq 0
    \end{equation}
    which
    means that $c_3/\delta$ is minimal when $\gamma _1\gamma _2$ is maximal,
    i.~e. when $\gamma _1=\gamma _2$. Inequality (\ref{equ*})
    results from the two
    inequalities

    $$\begin{array}{llll}
      (1/\alpha _2)G_{-1}&\leq&
      1/\gamma _3\gamma _6+1/\gamma _4\gamma _6+1/\gamma _5\gamma _6+A_{-2}&
      {\rm and}\\ \\
      G_{-1}&\leq&1/\gamma _4+1/\gamma _3\gamma _4+1/\gamma _3\gamma _5+
      1/\gamma _4\gamma _5~.\end{array}$$
    The first of them follows from
    $1/\gamma _i\gamma _6\geq 1/\gamma _i\alpha _2$, $i=3$, $4$, $5$ and
    $1/\alpha _2\gamma _6\leq 1/\alpha _2$. The second results from

    $$G_{-1}\leq 1/\gamma _3+1/\gamma _4+2/\gamma _5\leq 1/\gamma _4+
    1/\gamma _3\gamma _4+1/\gamma _3\gamma _5+
    1/\gamma _4\gamma _5~.$$
    It should be noticed that if
    instead of $\gamma _1$ and $\gamma _2$ one chooses other two quantities
    $\gamma _i$ and $\gamma _j$, $1\leq i<j\leq 5$, the proof that
    $c_3/\delta$ is minimal for $\gamma _i=\gamma _j$ can be performed in the
    same way.

    Thus one needs to consider only the situation $\gamma _1=\cdots =\gamma _5$
    in which case

    $$c_3/\delta =-(1+1/\alpha _2)(10/\gamma _1^2+5/\gamma _1\gamma _6)+
    (1/\alpha _2)(5/\gamma _1+1/\gamma _6)+10/\gamma _1^3+
    10/\gamma _1^2\gamma _6~.$$
    The coefficient of $1/\alpha _2$ is

    $$(-10/\gamma _1^2+5/\gamma _1)+(-5/\gamma _1\gamma _6+1/\gamma _6)<0~.$$
    Hence $c_3/\delta$ is minimal when $\alpha _2$ is minimal, i.~e. when
    $\alpha _2=5\gamma _1+\gamma _6-1$. Further we shorten the notation as
    follows: we set $a:=\alpha _2$, $r:=\gamma _1$ and $w:=\gamma _6$.
    Hence $0<r\leq 1\leq w\leq a$. For $a=5r+w-1$, we compute the product

    $$ar^3wc_3/\delta =(5r+w-1)r^3wc_3/\delta =:-2K~.$$
    One obtains

    $$\begin{array}{rcl}
      K&=&12r^3+25r^2w+5rw^2-25r^2-30rw-5w^2+5r+5w\\ \\
      &=&-(1-r)(12r^2+25rw+5w^2-5r-5w)-8r^2<0~.\end{array}$$
    The minimal possible value of $c_3/\delta$ being positive, one cannot
    have $c_7<0$ and $c_3<0$ at the same time. This proves the lemma.
    \end{proof}

\end{document}